\newtheorem{defi}{Definition}
\newtheorem{ass}{Assumption}
\newtheorem{lem}{Lemma}
\newtheorem{thm}{Theorem}
\newtheorem{rem}{Remark}
\begin{document}

\title{A Fully Parameter-Free Second-Order Algorithm for Convex-Concave Minimax Problems\footnotemark[1]}

\author{\name Jun-Lin Wang \email wjl37@shu.edu.cn \\
	\addr Department of Mathematics\\
	Shanghai University\\
	Shanghai 200444, People's Republic of China
	\AND
	\name Zi Xu\footnotemark[2] \email xuzi@shu.edu.cn \\
	\addr Department of Mathematics\\
	Shanghai University\\
	Shanghai 200444, People's Republic of China\\
	and\\
	Newtouch Center for Mathematics of Shanghai University\\
	Shanghai University\\
	Shanghai 200444, People's Republic of China
	\AND
	\name Hui-Ling Zhang \email zhanghl@amss.ac.cn \\
	\addr LSEC, ICMSEC, Academy of Mathematics and Systems Science\\
	Chinese Academy of Sciences \\
	Beijing 100190,  People's Republic of China
	}

\renewcommand{\thefootnote}{\fnsymbol{footnote}}
\footnotetext[1]{This work is supported by National Natural Science Foundation of China under the grants 12471294. }
\footnotetext[2]{Corresponding author.}

\editor{My editor}

\maketitle

\begin{abstract}In this paper, we study second-order algorithms for the convex-concave minimax problem, which has attracted much attention in many fields such as machine learning in recent years. We propose a Lipschitz-free cubic regularization (LF-CR) algorithm for solving the convex-concave minimax optimization problem without knowing the Lipschitz constant. It can be shown that the iteration complexity of the LF-CR algorithm to obtain an $\epsilon$-optimal solution with respect to the restricted primal-dual gap  is upper bounded by $\mathcal{O}(\rho^{2/3}\|z_0-z^*\|^2\epsilon^{-2/3})$ , where $z_0=(x_0,y_0)$ is a pair of initial points, $z^*=(x^*,y^*)$ is a pair of optimal solutions, and $\rho$ is the Lipschitz constant. We further propose a fully parameter-free cubic regularization (FF-CR) algorithm that does not require any parameters of the problem, including the Lipschitz constant and the upper bound of the distance from the initial point to the optimal solution. We also prove that the iteration complexity of the FF-CR algorithm to obtain an $\epsilon$-optimal solution with respect to the gradient norm is upper bounded by $\mathcal{O}(\rho^{2/3}\|z_0-z^*\|^{4/3}\epsilon^{-2/3}) $. Numerical experiments show the efficiency of both algorithms. To the best of our knowledge, the proposed FF-CR algorithm is a completely parameter-free second-order algorithm, and its iteration complexity is currently the best in terms of $\epsilon$  under the termination criterion of the gradient norm.
\end{abstract}

\begin{keywords}
parameter-free, second-order algorithm, convex-concave minimax optimization
\end{keywords}

\section{Introduction}
In this paper, we consider the following unconstrained minimax optimization problem:
\begin{equation}\label{P}
\min_{x\in\mathbb{R}^{m}} \max_{y\in\mathbb{R}^{n}} f(x,y),
\end{equation}where $f:\mathbb{R}^{m} \times \mathbb{R}^{n} \to \mathbb{R} $ is a continuously differentiable function. Throughout this paper, we assume that the function $f(x, y)$ is convex in $x$ for all $y\in\mathbb{R}^{n}$ and concave in $y$ for all $x\in\mathbb{R}^{m}$.

Minimax optimization problems have a wide range of applications in game theory \citep{Neumann1953TheoryOG,Blackwell1979TheoryOG}, robust optimization \citep{Ben-Tal2009RobustOP,Gao2016DistributionallyRS}, and many other fields.
Recently, new applications in the field of machine learning and data science, such as generative adversarial networks (GANs) \citep{Goodfellow2014GenerativeAN,Arjovsky2017WassersteinGA}, adversarial learning \citep{Sinha2018CertifiableDR}, AUC maximization \citep{Hanley1982TheMA,Ying2016StochasticOA}, have further stimulated the research interest of many international scholars in the minimax problems.

Depending on the available information of the objective function, there are three main types of optimization algorithms to solve the minimax problem \eqref{P}, including zeroth-order, first-order, and second-order optimization algorithms, which use the function value, gradient, and Hessian information of the objective function, respectively. Compared with zeroth-order and first-order optimization algorithms, second-order optimization algorithms have attracted much attention due to their faster convergence speed. In this paper, we focus on second-order optimization algorithms for solving \eqref{P}.

Currently, there are relatively few studies on second-order algorithms for solving minimax optimization problems. Existing second-order algorithms can be divided into two categories, i.e., implicit algorithms \citep{Bullins2022HigherMF,Ostroukhov2020TensorMF,Monteiro2012IterationOA,Jiang2022GeneralizedOM} and explicit algorithms \citep{Huang2022CubicRN,Huang2022AnAR,Jiang2024AdaptiveAO,Lin2022ExplicitSM,Nesterov2006bCubicRO}.

For the implicit case, these methods compute the subproblem of the current iteration involving update iteration, leading to implicit update rules.  \citet{Monteiro2012IterationOA} proposed a Newton proximal extragradient method for monotone variational inequality problems, which achieves an iteration complexity of $\mathcal{O}(\epsilon^{-2/3})$.  
To achieve faster convergence rates, methods that exploit $p$-order derivatives of $f(x,y)$ have been proposed, such as the higher-order generalized optimistic algorithm \citep{Jiang2022GeneralizedOM}, the higher-order mirror-prox algorithm \citep{Bullins2022HigherMF}, and the restarted higher-order mirror-prox algorithms \citep{Ostroukhov2020TensorMF}.
All of them can achieve an iteration complexity of $\mathcal{O}(\epsilon^{-2/(p+1)})$ in terms of the $\epsilon$-optimal solution of the primal-dual gap. 
In addition, \citet{Ostroukhov2020TensorMF} proved that the iteration complexity of the restarted higher-order mirror-prox algorithm is $\mathcal{O}(\epsilon^{-2/(p+1)}\log(\epsilon^{-1}))$ in terms of the $\epsilon$-optimal solution of the gradient norm.

For the explicit case, \citet{Huang2022CubicRN} proposed a homotopy-continuation cubic regularized Newton algorithm for \eqref{P} with Lipschitz continuous gradients and Hessians, which achieves an iteration complexity of $\mathcal{O}(\log(\epsilon^{-1}))$ (resp. $\mathcal{O}(\epsilon^{-(1-\theta)\theta^2})$ with  $ \theta \in (0,1)$) under a Lipschitz-type (resp. H{\"o}lderian-type) error bound condition in terms of the $\epsilon$-optimal solution of $\|z_k-z^*\|$. 
For problems with only Lipschitz continuous Hessians, \citet{Lin2022ExplicitSM} proposed several Newton-type algorithms which all achieve an iteration complexity of $\mathcal{O}(\rho^{2/3}\|z_0-z^*\|^2\epsilon^{-2/3})$ in terms of the $\epsilon$-optimal solution of the restricted primal-dual gap. 
For more general monotone variational inequalities problems, \citet{Nesterov2006bCubicRO} proposed a dual Newton's method with a complexity bound  $\mathcal{O}(\epsilon^{-1})$.  \citet{Huang2022AnAR} proposed an approximation-based regularized extra-gradient scheme, which exploits $p$-order derivatives of $f(x,y)$ and achieves an iteration complexity of $\mathcal{O}(\epsilon^{-2/(p+1)})$. All the above explicit methods need to solve cubic or higher-order regularized subproblems in each iteration.

It should be noted that for most of the existing results mentioned above, achieving optimal
complexity of the algorithm requires the assumption of knowing precise information about some parameters of the problem, such as the Lipschitz constant, the upper bound of the distance from the initial point to the optimal solution, etc. Accurately estimating these parameters is often challenging, and conservative estimates can significantly impact algorithm performance \citep{Lan2023OptimalAP}. Therefore, designing algorithms with complexity guarantees without relying on inputs of these parameters has attracted considerable attention recently.
This type of method is called parameter-free algorithm.

There are very few existing research results on parameter-free second-order methods for solving minimax problems \eqref{P}. 
\cite{Jiang2022GeneralizedOM} proposed an optimistic second-order method (OSOM), where a line search strategy is employed to remove the need for prior knowledge of the Lipschitz constant. It achieves an iteration complexity of $\mathcal{O}(\rho^{2/3}\|z_0-z^*\|^2\epsilon^{-2/3})$  in terms of the $\epsilon$-optimal solution of the restricted primal-dual gap.
\cite{Liu2022RegularizedNM} proposed a $\nu$-regularized extra-Newton method (ReNewton) for solving monotone variational inequality problems with an iteration complexity of $\mathcal{O}(\epsilon^{-2/(2+\nu)})$ when $\nu$ is known, and proposed an universal regularized extra-Newton method with an iteration complexity of $\mathcal{O}(\epsilon^{-4/(3+3\nu)})$ when $\nu$ is unknown, where $\nu$ is the H{\"o}lder continuous constant. The proposed methods also employ a line search strategy to eliminate the need for problem parameters.
Recently, \cite{Jiang2024AdaptiveAO} 
proposed an adaptive second-order optimistic  method (ASOM), which solves a quadratic regularization subproblem at each iteration. It achieves an iteration complexity of $\mathcal{O}(\rho^{2/3}\|z_0-z^*\|^2\epsilon^{-2/3})$ (resp. $\mathcal{O}(\rho\|z_0-z^*\|^2\epsilon^{-1})$) in terms of the $\epsilon$-optimal solution of the restricted primal-dual gap (resp. gradient norm) under the assumption that the Hessian is Lipschitz continuous when the Lipschitz constant is known. If the Lipschitz constant is not known, the algorithm then requires an additional assumption that the gradient is Lipschitz continuous and can prove an iteration complexity of $\mathcal{O}\left((\ell^{4/3}\rho^{2/3}\|z_0-z^*\|^{2/3} + \rho^2 \|z_0-z^*\|^2 )\epsilon^{-2/3}\right)$ $\left(\text{resp.}~ \mathcal{O}\left((\ell\rho\|z_0-z^*\| + \rho^2 \|z_0-z^*\|^2 )\epsilon^{-1}\right)\right)$  in terms of the $\epsilon$-optimal solution of the restricted primal-dual gap (resp. gradient norm).
 

To the best of our knowledge,  no parameter-free second-order algorithm has previously achieved the complexity bound $\mathcal{O}(\epsilon^{-2/3})$ with respect to the gradient norm for solving unconstrained convex-concave minimax problems. In this paper, we propose a completely parameter-free second-order algorithm for solving \eqref{P} that achieves this complexity without any prior knowledge of the parameters.

\subsection{Other Related Works}

There are many existing research results on first-order optimization algorithms for solving convex-concave minimax optimization problems. For instance, \cite{Korpelevich1976TheEM} proposed an extra-gradient (EG) algorithm, and its linear convergence rate for solving smooth strongly convex-strongly concave minimax problems and bilinear minimax problems was proved in \cite{Tseng1995OnLC}.
\cite{Nemirovski2004ProxWR} proposed a mirror-prox method, which achieves an iteration complexity of $\mathcal{O}(\ell\epsilon^{-1})$ in terms of the primal-dual gap for constrained minimax problems, where $\ell$ is the gradient Lipschitz continuous constant of $f(\cdot,\cdot)$.
\cite{Nesterov2007DualEA} proposed a dual extrapolation algorithm, which has the same iteration complexity.
\cite{Mokhtari2020AUA} (\cite{Mokhtari2020ConvergenceAEb}) established an overall complexity of $\mathcal{O}({\color{blue}\kappa}\log\epsilon^{-1})$ (resp. $\mathcal{O}(\ell\epsilon^{-1})$) for bilinear and strongly convex-strongly concave minimax problems (resp. smooth convex-concave minimax problems) for both the EG and the optimistic gradient descent ascent (OGDA) method \citep{Popov1980AMO} , where $\kappa$ is the condition number of $f(\cdot,\cdot)$. For more related results, we refer to \citep{Hsieh2019ONTC,Solodov1999AHA,Kotsalis2022SimpleAO,Ouyang2021LowerCB}
and the references therein.

For nonconvex-strongly concave minimax problems, various first-order algorithms have been proposed in recent works \citep{Bot,Jin,Lin2019,Lin2020,Lu}, and all of them can achieve the iteration complexity of  $\tilde{\mathcal{O}} (\ell\kappa_y^2\epsilon ^{-2} )$ in terms of stationary point of $\Phi (\cdot) = \max_{y\in \mathcal{Y}} f(\cdot, y)$ (when $x \in \mathbb{R}^{m}$), or stationary point of $f(x,y)$,  where $\kappa_y$ is the condition number for $f(x,\cdot)$. Furthermore, \cite{Zhang2021} proposed a generic acceleration framework which can improve the iteration complexity to $\tilde{\mathcal{O}}(\ell\sqrt{\kappa_y}\epsilon ^{-2} )$. Some second-order algorithms have also been proposed in recent works \citep{Luo2022FindingSS,Chen2023ACR,Yao2024TwoTR} to find second-order stationary point.

For general nonconvex-concave minimax problems, there are two types of algorithms, i.e., multi-loop algorithms and single-loop algorithms. For example, various multi-loop algorithms have been proposed in \citep{Kong,Lin2020,Nouiehed,Ostro,Rafique,Thek2019}. The best-known iteration complexity of multi-loop algorithms for solving nonconvex-concave minimax problems is $\tilde{\mathcal{O}}(\ell^{1.5} \epsilon ^{-2.5})$, which was achieved by \cite{Lin2020}.  
Various single-loop algorithms have also been proposed,  e.g., the gradient descent-ascent (GDA) algorithm \citep{Lin2019}, the hybrid block successive approximation (HiBSA) algorithm \citep{Lu}, the unified single-loop alternating gradient projection (AGP) algorithm \citep{Xu}, and the smoothed GDA algorithm \citep{Zhang}. Both the AGP algorithm and the smoothed GDA algorithm achieve the best-known iteration complexity, i.e., $\mathcal{O}( \ell^4\epsilon ^{-4})$, among single-loop algorithms for solving nonconvex-concave minimax problems. For solving nonconvex-linear minimax problems, \cite{Pan} proposed a new alternating gradient projection algorithm, which finds an $\epsilon$-first-order Nash equilibrium point \citep{Nouiehed} within $\mathcal{O}( \ell^2\epsilon ^{-3})$ iterations.

Next, we briefly review some existing works that focus on zeroth-order algorithms for solving the minimax optimization problem. For instance,
\cite{Wang} proposed a zeroth-order gradient descent ascent (ZO-GDA) algorithm and a zeroth-order gradient descent multi-step ascent (ZO-GDMSA) algorithm for the nonconvex-strongly concave setting, and the total complexity to find an $\epsilon$-stationary point of $\Phi (\cdot) = \max_{y\in \mathcal{Y}} f(\cdot, y)$ is $\mathcal{O}(\kappa_y^{5}(m+n) \epsilon^{-2})$ and $\mathcal{O}(\kappa_y(n+m\kappa_y\log(\epsilon^{-1}))\epsilon^{-2})$, respectively. \cite{xu21zeroth} proposed a zeroth-order alternating stochastic gradient projection (ZO-AGP) algorithm to solve the nonconvex-concave minimax problem, and proved that the iteration complexity of the ZO-AGP algorithm is $\mathcal{O}(\ell^4\epsilon^{-4})$ with the number of function value estimates per iteration being bounded by $\mathcal{O}(n+m)$.
\cite{xu22zeroth} proposed a zeroth-order alternating gradient descent ascent (ZO-AGDA)
algorithm and a zeroth-order variance reduced alternating gradient descent ascent (ZO-VRAGDA) algorithm for solving a class of nonconvex-nonconcave minimax problems under the deterministic and the stochastic setting, respectively. The iteration complexity of the ZO-AGDA algorithm and the ZO-VRAGDA algorithm have been proved to be $\mathcal{O}(\ell(n+m\kappa_y^2)\epsilon^{-2})$ and $\mathcal{O}(\ell\kappa_y^4nm\epsilon^{-3})$, respectively. For more related results, we refer to \citep{Liu,Beznosikov,Sadiev,Xu20,Shen} and the references therein.

\subsection{Contributions}
In this paper, we focus on developing a fully parameter-free second-order algorithm for solving unconstrained convex-concave minimax optimization problems. We summarize our contributions as follows.

We propose a Lipschitz-free cubic regularization (LF-CR) algorithm for solving convex-concave minimax optimization problems without knowledge of the Lipschitz constant. It can be proved that the iteration complexity  of the LF-CR algorithm to obtain an $\epsilon$-optimal solution with respect to the restricted primal-dual gap (resp. gradient norm) is upper bounded by  $\mathcal{O}(\rho^{2/3}\|z_0-z^*\|^2\epsilon^{-2/3})$ (resp. $\mathcal{O}(  \rho \|z_0-z^*\|^2 \epsilon^{-1})$), where $z_0=(x^0,y^0)$ is a pair of initial points, $z^*=(x^*,y^*)$ is a pair of the optimal solutions, and $\rho$ is the Lipschitz constant. This complexity bound is consistent with existing second-order methods in \cite{Lin2022ExplicitSM}. Compared to the adaptive second-order optimistic method (ASOM) in \cite{Jiang2024AdaptiveAO},  which has an iteration complexity of $\mathcal{O}\left((\ell^{4/3}\rho^{2/3}\|z_0-z^*\|^{2/3} + \rho^2 \|z_0-z^*\|^2 )\epsilon^{-2/3}\right)$ $\left(\text{resp.}~ \mathcal{O}\left((\ell\rho\|z_0-z^*\| + \rho^2 \|z_0-z^*\|^2 )\epsilon^{-1}\right)\right)$  in terms of the $\epsilon$-optimal solution of the restricted primal-dual gap (resp. gradient norm),
the LF-CR algorithm has better iteration complexity bound with respect to the Lipschitz constant $\rho$.

We further propose a fully parameter-free cubic regularization (FF-CR) algorithm without any knowledge of the problem parameters, including the Lipschitz constants and the upper bound of the distance from the initial point to an optimal solution. 
The iteration complexity to obtain an $\epsilon$-optimal solution with respect to  the gradient norm can be proved to be upper bounded by $\mathcal{O}\big( \rho^{2/3}\|z_0-z^*\|^{4/3}\epsilon^{-2/3}  \big) $. Numerical experiments show the efficiency of the two proposed algorithms. 

To the best of our knowledge, the proposed FF-CR algorithm is a completely parameter-free second-order algorithm, and its iteration complexity is currently the best in terms of $\epsilon$  under the termination criterion of the gradient norm.
Furthermore, our method only requires  a mild assumption that the Hessian is Lipschitz continuous and the norm of Hessian matrix at the the initial point is upper bounded, without making additional assumptions of error bound conditions as in \citep{Huang2022CubicRN}.
Compared to the restarted higher-order mirror-prox algorithm with the knowledge of parameters \citep{Ostroukhov2020TensorMF}, we improve the complexity for the convex-concave setting by a logarithmic factor. Moreover, our method achieves better iteration complexity compared to the adaptive second-order optimistic method (ASOM) in \citep{Jiang2024AdaptiveAO}, which has an iteration complexity of $\mathcal{O}\left((\ell\rho\|z_0-z^*\| + \rho^2 \|z_0-z^*\|^2 )\epsilon^{-1}\right)$ in terms of gradient norm. 
Table \ref{complexity} provides a detailed comparison of the two proposed algorithms with existing parameter-free second-order algorithms for solving convex-concave minimax problems.

\begin{table}[!ht]
	\centering
	\caption{Comparison of parameter-free second-order algorithms for solving convex-concave minimax problems}\label{complexity}
		\resizebox{\textwidth}{!}{\begin{threeparttable}
			\centering
			\begin{tabular}{c|c|c}
				\hline
				Algorithms  & Termination Criterion \tnote{1} & Iteration Complexity\tnote{2}  \\ \hline
				ReNewton \citep{Liu2022RegularizedNM}  & $\textsc{gap}(x, y; \beta) \leqslant \epsilon$ & $\mathcal{O}(\rho^{2/3}D^2\epsilon^{-2/3} )$ \\ 
				OSOM \citep{Jiang2022GeneralizedOM}  & $\textsc{gap}(x, y; \beta) \leqslant \epsilon$ & $\mathcal{O}(\rho^{2/3}\|z_0-z^*\|^2\epsilon^{-2/3} )$ \\ 
				ASOM \citep{Jiang2024AdaptiveAO}   & $\textsc{gap}(x, y; \beta) \leqslant \epsilon$ & $\mathcal{O}\left((\ell^{4/3}\rho^{2/3}\|z_0-z^*\|^{2/3} + \rho^2 \|z_0-z^*\|^2 )\epsilon^{-2/3}\right)$ \\
			 & $\|\nabla f(x, y)\| \leqslant \epsilon$ & $\mathcal{O}\left((\ell\rho\|z_0-z^*\| + \rho^2 \|z_0-z^*\|^2 )\epsilon^{-1}\right)$ \\
				{\bfseries LF-CR (This paper)}  & $\textsc{gap}(x, y; \beta) \leqslant \epsilon$ & $\mathcal{O}(\rho^{2/3}\|z_0-z^*\|^2\epsilon^{-2/3})$ \\
			    & $\|\nabla f(x, y)\| \leqslant \epsilon$ & $\mathcal{O}(\rho\|z_0-z^*\|^2\epsilon^{-1})$
				\\
				{\bfseries FF-CR (This paper)}  & $\|\nabla f(x, y)\| \leqslant \epsilon$ & 				
				$ \mathcal{O}\big( \rho^{2/3}\|z_0-z^*\|^{4/3}\epsilon^{-2/3} \big) $
				\\ \hline
			\end{tabular}
			\begin{tablenotes}
			\footnotesize
				\item [1] ``$\textsc{gap}(x, y; \beta)$'' denotes the (restricted) primal-dual gap defined in \eqref{def:reGAP:1}.
				\item [2] ``$D$'' denotes the diameter of the constraint set, ``$\ell$" and ``$\rho$" represent the Lipschitz constants of the gradient and the Hessian, respectively.
			\end{tablenotes}
	\end{threeparttable}}
\end{table}


\subsection{Organization}
The rest of this paper is organized as follows.
In Section 2, we propose a Lipschitz-free cubic regularization (LF-CR) algorithm and prove its iteration complexity. In Section 3, we propose a fully parameter-free cubic regularization (FF-CR) algorithm and prove its iteration complexity. In Section 4, we conduct experiments to demonstrate the efficiency of our algorithm. Finally, some conclusions are made in Section 5.

{\bfseries Notation}.
We use $\|\cdot\|$ to denote the spectral norm of matrices and Euclidean norm of vectors. We use $I_n \in \mathbb{R}^{n\times n}$ to denote identity matrix.
For a function $f(x,y):\mathbb{R}^m\times\mathbb{R}^n\rightarrow \mathbb{R}$, we use $\nabla_{x} f(x,y)$ (or $\nabla_{y} f(x, y)$) to denote the partial gradient of $f$ with respect to the first variable (or the second variable) at point $(x, y)$. Additionally, we use $\nabla f(x, y)=(\nabla_{x}f(x,y), \nabla_{y}f(x,y))$ to denote the full gradient of $f$ at point $(x, y)$. Similarly, we denote $\nabla_{xx}^2 f(x, y)$, $ \nabla^2_{xy} f(x, y)$, $\nabla_{yx}^2f(x,y)$ and $ \nabla_{yy}^2 f(x, y)$ as the second-order partial derivatives of $f(x,y)$ and use $\nabla^2f(x,y)=\begin{bmatrix} \nabla_{xx}^{2} f(x, y) & \nabla_{xy}^{2} f(x, y) \\ \nabla_{yx}^{2} f(x, y) & \nabla_{yy}^{2} f(x, y) \end{bmatrix}$ to denote the full Hessian of $f$ at point $(x, y)$. We use the notation $\mathcal{O} (\cdot), \Omega(\cdot)$ to hide only absolute constants which do not depend on any problem parameter, and $\tilde{\mathcal{O}} (\cdot) $ to hide only absolute constants and logarithmic factors. 
Finally, we define the vector $z = [x;y] \in \mathbb{R}^{m+n}$ and the gradient operator $F: \mathbb{R}^{m+n} \to \mathbb{R}^{m + n}$ as follows, 
\begin{equation}\label{def:operator}
F(z) = \begin{bmatrix} \nabla_{x} f(x, y) ; - \nabla_{y} f(x, y) \end{bmatrix}. 
\end{equation}
Accordingly, the Jacobian of $F$ is defined as follows, 
\begin{equation}\label{def:Jacobian}
D F(z) = \begin{bmatrix} ~~ \nabla_{xx}^{2} f(x, y) & ~~\nabla_{xy}^{2} f(x, y) \\ -\nabla_{yx}^{2} f(x, y) & -\nabla_{yy}^{2} f(x, y) \end{bmatrix} \in \mathbb{R}^{(m + n) \times (m + n)}. 
\end{equation}

\section{A Lipschitz-Free Cubic Regularization (LF-CR) Algorithm}\label{Adaptive-Newton-MinMax}

In this section, we propose a Lipschitz-free cubic regularization (LF-CR) algorithm that does not require prior knowledge of the Lipschitz constant. We iteratively approximate the Lipschitz constants and show that the iteration complexity of the proposed algorithm remains the same order in the absence of information about the Lipschitz constants. Moreover, the LF-CR algorithm is a key subroutine in our second parameter-free algorithm that we will present in the next section.

The proposed LF-CR algorithm is based on the Newton-MinMax algorithm \citep{Lin2022ExplicitSM}, which consists of the following two important algorithmic components at each iteration:
\begin{itemize}
	\item Gradient update through cubic regularization minimization: for given $\hat{z}_k$, compute $z_{k+1}$ such that it is a solution of the following nonlinear equation:
	\begin{align}\label{gradient update}
	F(\hat{z}_k)+ DF(\hat{z}_k)(z-\hat{z}_k)+6\rho\|z-\hat{z}_k\|(z-\hat{z}_k)=0,
	\end{align}
	where $\rho$ is the Lipschitz constant of the Hessian of $f(x,y)$.
	\item Extragradient update:
	\begin{align*}
	\hat{z}_{k+1}=\hat{z}_k-\lambda_{k+1}F(z_{k+1}),
	\end{align*}
    where $\lambda_{k+1}=\frac{c_k}{\rho\|z_{k+1}-\hat{z}_k\|}$ for some constant $c_k$.  Note that $\|z_{k+1}-\hat{z}_k\|=0$ if and only if $\|F(\hat{z}_k)\|=0,$ which will be proved later.
\end{itemize}
Although the Newton-MinMax algorithm achieves the best known iteration complexity,  it requires  knowing $\rho$, the Lipschitz constant of the Hessian of $f(x,y)$, which limits its applicability.
To eliminate the need for the prior knowledge of $\rho$, we propose a LF-CR algorithm that employs a line search strategy in the first gradient update step. Similar line search  strategies have been used in \citep{Liu2022RegularizedNM,Nesterov2006CubicRO,Nesterov2015UniversaGM,Nesterov2018LecturesOC}.  More detailedly, in each iteration of the proposed LF-CR algorithm, we aim to search a pair  $(H_{k,i_k},z_{k,i_k})$ by backtracking such that
\begin{align}\label{backtracking condition}
\|F(z_{k,i_k}) - F(\hat{z}_k)- DF(\hat{z}_k)(z_{k,i_k}-\hat{z}_k)\| \leqslant \frac{H_{k,i_k}}{2}\|z_{k,i_k}-\hat{z}_k\|^2,
\end{align}
where $z_{k,i_k}$ is a solution of the following nonlinear equation:
\begin{align}\label{nesubproblem}
F(\hat{z}_k)+ DF(\hat{z}_k)(z-\hat{z}_k)+6H_{k,i_k}\|z-\hat{z}_k\|(z-\hat{z}_k)=0.
\end{align}
We then introduce how to solve the nonlinear equations in \eqref{nesubproblem}. Similar to the discussion in \citep{Lin2022ExplicitSM}, \eqref{nesubproblem} can be reformulated as finding a pair of $(\theta_{k,i_k},z_{k,i_k})$ such that
\begin{align*}
\theta_{k,i_k} &= 6H_{k,i_k}\|z_{k,i_k}-\hat{z}_k\|,\\
(DF(\hat{z}_k)&+\theta_{k,i_k} I_{m+n})(z_{k,i_k}-\hat{z}_k)=-F(\hat{z}_k).
\end{align*}
Solving the above nonlinear system of equations is equivalent to first solving the following univariate nonlinear equation with respect to $\theta$:
\begin{align}
&\phi(\theta):=\sqrt{\|(DF(\hat{z}_k)+\theta I_{m+n})^{-1}F(\hat{z}_k)\|^2}-\frac{\theta}{6H_{k,i_k}}=0. \label{ne1}
\end{align}
Similar to the proof of Proposition 4.5 in \citep{Lin2022ExplicitSM}, when $\theta > 0$, $\phi(\theta)$ is a strictly decreasing and convex function. Therefore, we can first use Newton's method to solve the one-dimensional nonlinear equation \eqref{ne1} to find the optimal solution of $\theta$. Then by substituting it into the following equation and solving the following linear system, 
\begin{align}
&(DF(\hat{z}_k)+\theta I_{m+n})(z_{k,i}-\hat{z}_k)=-F(\hat{z}_k)\label{ne2},
\end{align}
we obtain $z_{k,i_k}$.   
Next, the proposed LF-CR algorithm performs an extra gradient update on $z_{k,i_k}$. The detailed algorithm is formally described in Algorithm \ref{alg:1}.

\begin{algorithm}[th!]
	\caption{A Lipschitz-free cubic regularization (LF-CR) algorithm }
	\label{alg:1}
	\begin{algorithmic}
		\STATE{\textbf{Step 1}: {Input $z_0,\tilde{z}_0$; Set $H_0=\frac{\|DF(z_0)-DF(\tilde{z}_0)\|}{\|z_0-\tilde{z}_0\|} (z_0 \neq \tilde{z}_0)$, $k=1$, $\hat{z}_1=z_1=z_0.$}}
		\STATE{\textbf{Step 2}: Find a pair $(H_{k,i},z_{k,i})$} to satisfy  \eqref{backtracking condition}:
		\STATE{\quad\textbf{(a)}: Set $i=1$, $H_{k,i}=H_{k-1}$.}
		\STATE{\quad\textbf{(b)}: Update $z_{k,i}$ such that it is a solution of the following nonlinear equation:	
			\begin{equation}\label{step2b}
				F(\hat{z}_k)+ DF(\hat{z}_k)(z-\hat{z}_k)+ 6H_{k,i}\|z-\hat{z}_k\|(z-\hat{z}_k)=0.
		\end{equation}}
		\STATE{\quad\textbf{(c)}: If 
			\begin{equation}\label{step2c}
				\|F(z_{k,i}) - F(\hat{z}_k)- DF(\hat{z}_k)(z_{k,i}-\hat{z}_k)\| \leqslant \frac{H_{k,i}}{2}\|z_{k,i}-\hat{z}_k\|^2,
			\end{equation}
			\qquad\quad~$H_{k}=H_{k,i}$,  $z_{k+1}=z_{k,i}$; otherwise, set $H_{k,i+1}=2H_{k,i}$, $i=i+1, $ go to Step 2(b).}	
		\STATE{\textbf{Step 3}: If $\|z_{k+1}-\hat{z}_k\|=0$, stop and output $(\hat{z}_k,H_k)$.}
		\STATE{\textbf{Step 4}: Update $\hat{z}_k$: 
			\begin{equation}\label{step4}
				\hat{z}_{k+1}=\hat{z}_k-\lambda_{k+1}F(z_{k+1}),
		\end{equation} \qquad \quad~with $\lambda_{k+1}=\frac{c_k}{H_k\|z_{k+1}-\hat{z}_k\|}$.}
		\STATE{\textbf{Step 5}: If 
			some stationary condition or stopping criterion is satisfied,  let $\bar{z}_k=\frac{1}{\sum_{i=1}^{k}\lambda_{i+1}}\sum_{i=1}^{k}\lambda_{i+1} z_{i+1}$ and output $(\bar{z}_k,H_{k})$;
			otherwise, set $k=k+1, $ go to Step 2(a).}
	\end{algorithmic}
\end{algorithm}


In the following subsection, we establish the iteration complexity of the LF-CR algorithm to obtain an $\epsilon$-optimal solution of \eqref{P} with respect to  the restricted primal-dual gap and gradient norm. 
Our complexity analysis builds upon the framework of \citep{Lin2022ExplicitSM} while addressing the more challenging scenario where the Hessian Lipschitz constant is unknown. We relax the assumption and only require that the Hessian matrix is locally Lipschitz continuous. The key difference in the proof is that we establish a lower bound for the adaptive step size $\lambda_{k+1}=\frac{c_k}{H_k\|z_{k+1}-\hat{z}_k\|}$, where $H_k$ is estimated dynamically via line search rather than being a fixed constant. This requires constraining $H_k$ in the algorithm design and carefully considering the overhead of line search in the complexity analysis. In addition, under the assumption of local Lipschitz continuity, we need to prove the boundedness of the sequence of the iteration points.

\subsection{Complexity analysis}
Before we prove the iteration complexity of the LF-CR algorithm, we first define the optimal solution of \eqref{P} denoted by $(x^*,y^*)$ and the stationarity gap function, which provides a performance measure for how close a point $(\hat{x}, \hat{y})$ is to $(x^*,y^*)$.
\begin{defi}\label{def:optsol}
	A pair $(x^*,y^*)$ is called an optimal solution of problem \eqref{P}, if
	\begin{align*}
		f(x^*,y) \leqslant f(x^*,y^*) \leqslant f(x,y^*) ~\text{for all}~ x \in \mathbb{R}^m ~\text{and}~ y \in \mathbb{R}^n.
	\end{align*} 
\end{defi}
Similar to \citep{Lin2022ExplicitSM}, we define the restricted primal-dual gap function for \eqref{P} as follows.
\begin{defi}\label{def:reGAP}
	Denote
	\begin{equation}
	\textsc{gap}(\hat{x}, \hat{y}; \beta) = \max_{y \in \mathbb{B}_\beta(y^\star)} f(\hat{x}, y) - \min_{x \in \mathbb{B}_\beta(x^\star)} f(x, \hat{y}),\label{def:reGAP:1}
	\end{equation}
	where $(x^*,y^*)$ is an optimal solution of $f(x,y)$ and $\beta$ is large enough so that for a given $(\hat{x},\hat{y})$, $\|\hat{z} - z^\star\|\leqslant \beta$, $\mathbb{B}_\beta(x^*)=\{ x : \|x-x^*\| \leqslant \beta \}$  and $\mathbb{B}_\beta(y^*)=\{ y : \|y-y^*\| \leqslant \beta \}$.
\end{defi}
\begin{defi}\label{es-reGAP}
	A pair $(\hat{x}, \hat{y})$ is called an $\epsilon$-optimal solution of \eqref{P} with respect to the restricted primal-dual gap if $\textsc{gap}(\hat{x}, \hat{y}; \beta) \leqslant \epsilon.$
\end{defi}
\begin{defi}\label{es-gn}
     A point $(\hat{x}, \hat{y})$ is called an $\epsilon$-optimal solution of \eqref{P} with respect
	to the gradient norm if $\|\nabla f(\hat{x}, \hat{y})\| \leqslant \epsilon.$
\end{defi}
Similar to the definition of gradient Lipschitz continuity in \citep{Vladarean2021A}, we introduce the following definitions for local and global Hessian Lipschitz continuity.
\begin{defi}
	A function $f(x, y)$ is globally Hessian Lipschitz continuous, if there exists a  positive scalar $\rho$ such that 
	\begin{align*}
		\| \nabla^2 f(x, y) - \nabla^2 f(x', y')\| \leqslant \rho\|(  x - x', y - y' )\|, ~\forall (x, y),(x', y') \in \mathbb{R}^{m} \times \mathbb{R}^{n}.
	\end{align*} 
\end{defi}
\begin{defi}
	A function $f(x, y)$ is locally Hessian Lipschitz continuous, if there exists a positive scalar $\rho({\mathcal{C}})$ such that for any compact subset $\mathcal{C} \in \mathbb{R}^{m} \times \mathbb{R}^{n}$,
	\begin{align*}
		\| \nabla^2 f(x, y) - \nabla^2 f(x', y')\| \leqslant \rho({\mathcal{C}})\|(  x - x', y - y' )\|, ~\forall (x, y),(x', y') \in \mathcal{C}.
	\end{align*} 
\end{defi}
It is important to note that the local Hessian-Lipschitz condition is strictly weaker than the global condition, and therefore covers a wider range of functions. For example, functions like $e^{x}$ and $x^p ~(p \geqslant 4)$ are locally Hessian-Lipschitz continuous, but not globally.

We also need to make the following assumptions about the function $f(x,y).$
\begin{ass}\label{ass:func}
The function $f(x, y)$ is convex-concave, i.e., $f(\cdot, y)$ is convex for any fixed $y \in \mathbb{R}^n$ and
	$f(x, \cdot)$ is concave for any fixed $x \in \mathbb{R}^m$.
	The optimal solution of problem \eqref{P} exists.
\end{ass}
\begin{ass}\label{ass:smoothl}
	The function $f(x, y)$ is locally Hessian Lipschitz continuous.
\end{ass} 
\begin{lem}\label{2:add1}
		Suppose that Assumptions \ref{ass:func} and \ref{ass:smoothl} hold. Let $\{z_k\}$ and $\{\hat{z}_k\}$ be the sequences generated by Algorithm \ref{alg:1}.
		Then, $\|z_{k+1}-\hat{z}_k\|=0$ if and only if $\|F(\hat{z}_k)\|=0.$
\end{lem}
\begin{proof}
		By Step 2 in Algorithm \ref{alg:1}, we get 
		\begin{align}
			&F(\hat{z}_{k}) + DF(\hat{z}_{k})(z_{k+1}-\hat{z}_{k}) + 6H_k \|z_{k+1}-\hat{z}_{k}\|(z_{k+1}-\hat{z}_{k}) = 0, \label{2:add1:1}\\
			&\|F(z_{k+1}) - F(\hat{z}_{k})- DF(\hat{z}_{k})(z_{k+1}-\hat{z}_{k})\| \leqslant \frac{H_k}{2}\|z_{k+1}-\hat{z}_{k}\|^2. \label{2:add1:2}
		\end{align}
Then, on the one hand, if $\|z_{k+1}-\hat{z}_k\|=0$, then it is straightforward to prove $\|F(\hat{z}_k)\|=0$. On the other hand, if $\|F(\hat{z}_k)\|=0$, by taking the inner product of both sides of \eqref{2:add1:1} with $z_{k+1}-\hat{z}_{k}$, we obtain
\begin{equation*}
	\left\langle z_{k+1}-\hat{z}_{k}, DF(\hat{z}_{k})(z_{k+1}-\hat{z}_{k}) \right\rangle + 6H_k \|z_{k+1}-\hat{z}_{k}\|^3 = 0,
\end{equation*}
which implies $\|z_{k+1}-\hat{z}_k\|=0$. The proof is completed.		
\end{proof}

The following lemma helps us prove the final convergence rate, it describes the iterations involved in the algorithm and provides us with a key descent inequality.

\begin{lem}\label{2:lem1}
	Suppose that Assumptions \ref{ass:func} and \ref{ass:smoothl} hold. Let $\{z_k\}$ and $\{\hat{z}_k\}$ be the sequences generated by Algorithm \ref{alg:1}. Assuming that $\|z_{k+1}-\hat{z}_k\| \neq 0$, set $\lambda_{k+1}=\frac{c_k}{H_k\|z_{k+1}-\hat{z}_k\|}$ with $c_k$ being a constant. If in addition $\frac{1}{33}\leqslant c_k \leqslant \frac{1}{13}$, then for any $z \in \mathbb{R}^{m}\times \mathbb{R}^{n}$ and $s \geqslant 1$, we have
	\begin{align}\label{2:lem1:1}
	&\sum_{k=1}^{s} \lambda_{k+1} \langle z_{k+1} - z, F(z_{k+1})\rangle \nonumber\\
	\leqslant&  - \frac{1}{2}\|\hat{z}_{s+1}-z_{0}\|^{2} +  \langle z_0 - z,z_0 - \hat{z}_{s+1}\rangle - \frac{1}{24} \sum_{k=1}^{s} \|z_{k+1} - \hat{z}_{k}\|^2.   
	\end{align}
\end{lem}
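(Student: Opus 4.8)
The plan is to start from the extragradient step \eqref{step4}, which can be read as $\lambda_{k+1}F(z_{k+1}) = \hat{z}_k - \hat{z}_{k+1}$, so that each summand on the left of \eqref{2:lem1:1} becomes a purely geometric quantity:
\[
\lambda_{k+1}\langle z_{k+1}-z, F(z_{k+1})\rangle = \langle z_{k+1}-z,\ \hat{z}_k-\hat{z}_{k+1}\rangle .
\]
I would then split $z_{k+1}-z = (\hat{z}_{k+1}-z) + (z_{k+1}-\hat{z}_{k+1})$ to separate a telescoping contribution from a per-iteration contribution. The guiding principle is that the telescoping part reproduces the first two terms of the right-hand side of \eqref{2:lem1:1}, while the per-iteration part, once the cubic subproblem \eqref{step2b} and the backtracking test \eqref{step2c} are invoked, produces the negative term $-\tfrac{1}{24}\sum_k\|z_{k+1}-\hat{z}_k\|^2$.

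For the telescoping part I would apply the three-point identity $\langle a-b,\,c-a\rangle = \tfrac12\|c-b\|^2 - \tfrac12\|a-b\|^2 - \tfrac12\|c-a\|^2$ with $a=\hat{z}_{k+1}$, $b=z$, $c=\hat{z}_k$, so that $\sum_{k=1}^{s}\langle \hat{z}_{k+1}-z,\hat{z}_k-\hat{z}_{k+1}\rangle$ telescopes, using the initialization $\hat{z}_1=z_0$, to $\tfrac12\|z_0-z\|^2 - \tfrac12\|\hat{z}_{s+1}-z\|^2 - \tfrac12\sum_{k=1}^{s}\|\hat{z}_k-\hat{z}_{k+1}\|^2$. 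A one-line rearrangement, namely $\tfrac12\|z_0-z\|^2 - \tfrac12\|\hat{z}_{s+1}-z\|^2 = -\tfrac12\|\hat{z}_{s+1}-z_0\|^2 + \langle z_0-z,\ z_0-\hat{z}_{s+1}\rangle$, recovers exactly the first two terms of \eqref{2:lem1:1}. After substituting $\hat{z}_k-\hat{z}_{k+1}=\lambda_{k+1}F(z_{k+1})$ into the remaining cross term and writing $z_{k+1}-\hat{z}_{k+1} = (z_{k+1}-\hat{z}_k)+\lambda_{k+1}F(z_{k+1})$, the whole problem reduces to the per-iteration inequality
\[
\lambda_{k+1}\langle z_{k+1}-\hat{z}_k, F(z_{k+1})\rangle + \tfrac12\lambda_{k+1}^2\|F(z_{k+1})\|^2 \ \leqslant\ -\tfrac{1}{24}\|z_{k+1}-\hat{z}_k\|^2 ,
\]
where the $-\tfrac12\|\hat{z}_k-\hat{z}_{k+1}\|^2$ from the telescoping cancels half of the quadratic term.

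The core of the argument is to control the two quantities $\langle z_{k+1}-\hat{z}_k, F(z_{k+1})\rangle$ and $\|F(z_{k+1})\|$ via the subproblem and the line-search test. From \eqref{step2b} one has $F(\hat{z}_k)+DF(\hat{z}_k)(z_{k+1}-\hat{z}_k) = -6H_k\|z_{k+1}-\hat{z}_k\|(z_{k+1}-\hat{z}_k)$, so with the residual $e_k := F(z_{k+1})-F(\hat{z}_k)-DF(\hat{z}_k)(z_{k+1}-\hat{z}_k)$ (which satisfies $\|e_k\|\leqslant \tfrac{H_k}{2}\|z_{k+1}-\hat{z}_k\|^2$ by \eqref{step2c}) I can write $F(z_{k+1}) = e_k - 6H_k\|z_{k+1}-\hat{z}_k\|(z_{k+1}-\hat{z}_k)$. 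Cauchy–Schwarz then yields $\langle z_{k+1}-\hat{z}_k, F(z_{k+1})\rangle \leqslant -\tfrac{11}{2}H_k\|z_{k+1}-\hat{z}_k\|^3$ and the triangle inequality gives $\|F(z_{k+1})\|\leqslant \tfrac{13}{2}H_k\|z_{k+1}-\hat{z}_k\|^2$.

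Substituting $\lambda_{k+1}=\tfrac{c_k}{H_k\|z_{k+1}-\hat{z}_k\|}$ collapses every factor of $H_k$ and of $\|z_{k+1}-\hat{z}_k\|$, leaving the per-iteration bound in the form $\big(\tfrac{169}{8}c_k^2 - \tfrac{11}{2}c_k\big)\|z_{k+1}-\hat{z}_k\|^2$. The final step is the scalar verification that $\tfrac{169}{8}c_k^2 - \tfrac{11}{2}c_k \leqslant -\tfrac{1}{24}$ for all $c_k\in[\tfrac{1}{33},\tfrac{1}{13}]$: this quadratic in $c_k$ has its vertex at $c=\tfrac{22}{169}>\tfrac{1}{13}$, hence is decreasing on the admissible interval, so its maximum is attained at $c_k=\tfrac{1}{33}$, where it is comfortably below $-\tfrac{1}{24}$. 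Summing over $k=1,\dots,s$ finishes the proof. I expect the main obstacle to be bookkeeping the constants in the third paragraph — extracting the sharp factors $\tfrac{11}{2}$ and $\tfrac{13}{2}$ from the subproblem plus backtracking and checking that the resulting constraint on $c_k$ is exactly the interval $[\tfrac{1}{33},\tfrac{1}{13}]$ that still leaves the target coefficient $\tfrac{1}{24}$; the telescoping and the three-point identity are routine by comparison.
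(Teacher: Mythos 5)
Your proof is correct: the reduction to the per-iteration inequality, the bounds $\langle z_{k+1}-\hat z_k, F(z_{k+1})\rangle \leqslant -\tfrac{11}{2}H_k\|z_{k+1}-\hat z_k\|^3$ and $\|F(z_{k+1})\|\leqslant \tfrac{13}{2}H_k\|z_{k+1}-\hat z_k\|^2$ drawn from \eqref{step2b}--\eqref{step2c}, and the scalar verification all check out (at $c_k=\tfrac{1}{33}$ the quadratic equals $\tfrac{169}{8712}-\tfrac{1}{6}=-\tfrac{1283}{8712}$, well below $-\tfrac{1}{24}=-\tfrac{363}{8712}$). The skeleton matches the paper's proof — substitute the extragradient update, split off a telescoping part, and use the subproblem plus the backtracking test to make the per-iteration part negative — but you process the per-iteration term by a genuinely different route. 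The paper keeps $\hat z_{k+1}$ in that term: it bounds $\lambda_{k+1}\langle z_{k+1}-\hat z_{k+1},F(z_{k+1})\rangle$ via Cauchy--Schwarz in terms of $\|z_{k+1}-\hat z_k\|$ and $\|\hat z_k-\hat z_{k+1}\|$, applies $c_k\geqslant \tfrac{1}{33}$ to the cubic term and $c_k\leqslant\tfrac{1}{13}$ to the cross term, and closes with Young's inequality so that the resulting $+\tfrac{1}{2}\|\hat z_k-\hat z_{k+1}\|^2$ is cancelled exactly by the three-point identity when summing. You instead eliminate $\hat z_{k+1}$ at once by writing $z_{k+1}-\hat z_{k+1}=(z_{k+1}-\hat z_k)+\lambda_{k+1}F(z_{k+1})$, at the cost of needing an explicit bound on $\|F(z_{k+1})\|$ (an estimate of this type appears in the paper only later, in the FF-CR analysis of Lemma \ref{3:lem2}); in return the whole estimate collapses to the single scalar inequality $\tfrac{169}{8}c_k^2-\tfrac{11}{2}c_k\leqslant-\tfrac{1}{24}$ on $[\tfrac{1}{33},\tfrac{1}{13}]$. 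Your route makes transparent exactly what the step-size window must guarantee — and shows it holds with slack, so the window is not tight — while the paper's route never needs to bound $\|F(z_{k+1})\|$ itself, only inner products against it; both arguments are equally rigorous and yield the identical constant $\tfrac{1}{24}$.
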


\begin{proof}
	By \eqref{step4} in Algorithm \ref{alg:1}, we have
	\begin{align}\label{2:lem1:2}
	&~\lambda_{k+1} \langle z_{k+1} - z, F(z_{k+1})\rangle \nonumber \\
	=&~ \lambda_{k+1} \langle z_{k+1} - \hat{z}_{k+1}, F(z_{k+1})\rangle+\lambda_{k+1} \langle \hat{z}_{k+1} - z_0, F(z_{k+1})\rangle +\lambda_{k+1} \langle z_0 - z, F(z_{k+1})\rangle \nonumber\\
	=&~ \lambda_{k+1} \langle z_{k+1} - \hat{z}_{k+1}, F(z_{k+1})\rangle +  \langle\hat{z}_{k+1} - z_0,\hat{z}_{k}-\hat{z}_{k+1}\rangle + \langle z_0 - z,\hat{z}_{k}-\hat{z}_{k+1}\rangle.
	\end{align}
	We first bound the first term of the right-hand side of \eqref{2:lem1:2}. 
	By \eqref{2:add1:1} and \eqref{2:add1:2}, we obtain 
	\begin{align}
	&~\langle z_{k+1} - \hat{z}_{k+1}, F(z_{k+1})\rangle \nonumber\\
	= &~\langle z_{k+1} - \hat{z}_{k+1}, F(z_{k+1})-F(\hat{z}_{k})-DF(\hat{z}_{k})(z_{k+1}-\hat{z}_{k})\rangle \nonumber\\
	&~+ \langle z_{k+1} - \hat{z}_{k+1},F(\hat{z}_{k})+DF(\hat{z}_{k})(z_{k+1}-\hat{z}_{k})\rangle \nonumber\\
	\leqslant &~ \frac{H_k}{2}\|z_{k+1} - \hat{z}_{k+1}\|\|z_{k+1}-\hat{z}_{k}\|^2 - 6H_k\|z_{k+1}-\hat{z}_{k}\|\langle z_{k+1}-\hat{z}_{k+1},z_{k+1}-\hat{z}_{k}\rangle. \label{lem2.1:2}
	\end{align}
	By the Cauchy-Schwarz inequality, we can easily get that
	\begin{align*}
	\|z_{k+1} - \hat{z}_{k+1}\|\|z_{k+1}-\hat{z}_{k}\|^2 
	&\leqslant \|z_{k+1}-\hat{z}_{k}\|^3+\|z_{k+1}-\hat{z}_{k}\|^2\|\hat{z}_{k}- \hat{z}_{k+1}\|,\\
	-\langle z_{k+1}-\hat{z}_{k+1},z_{k+1}-\hat{z}_{k}\rangle
	&\leqslant -\|z_{k+1}-\hat{z}_{k}\|^2 + \|z_{k+1}-\hat{z}_{k}\|\|\hat{z}_{k}-\hat{z}_{k+1}\|.
	\end{align*}
	By plugging the above two inequalities into \eqref{lem2.1:2}, and using $\frac{1}{33} \leqslant \lambda_{k+1}H_k\|z_{k+1}-\hat{z}_{k}\| \leqslant  \frac{1}{13}$, we have that
	\begin{align} \label{2:lem1:3}
	&~\lambda_{k+1}\langle z_{k+1} - \hat{z}_{k+1}, F(z_{k+1})\rangle \nonumber\\
	\leqslant & -\frac{11H_k\lambda_{k+1}}{2}\|z_{k+1}-\hat{z}_{k}\|^3 +  \frac{13H_k\lambda_{k+1}}{2}\|z_{k+1}-\hat{z}_{k}\|^2\|\hat{z}_{k} - \hat{z}_{k+1}\| \nonumber \\
	\leqslant & -\frac{1}{6}\|z_{k+1}-\hat{z}_{k}\|^2 +  \frac{1}{2}\|z_{k+1}-\hat{z}_{k}\|\|\hat{z}_{k} - \hat{z}_{k+1}\| \nonumber\\
	\leqslant & -\frac{1}{6}\|z_{k+1}-\hat{z}_{k}\|^2 + \frac{1}{8}\|z_{k+1}-\hat{z}_{k}\|^2 +  \frac{1}{2}\|\hat{z}_{k} - \hat{z}_{k+1}\|^2  \nonumber\\
	= & -\frac{1}{24}\|z_{k+1}-\hat{z}_{k}\|^2 +  \frac{1}{2}\|\hat{z}_{k} - \hat{z}_{k+1}\|^2, 
	\end{align}
	where the last inequality is by the Cauchy-Schwarz inequality. 
	
	On the other hand, it is easy to verify that 
	\begin{align}\label{2:lem1:4}
	\langle\hat{z}_{k+1} - z_0,\hat{z}_{k}-\hat{z}_{k+1}\rangle = \frac{1}{2}(\|\hat{z}_{k}-z_0\|^2 - \|\hat{z}_{k+1}-z_0\|^2 - \|\hat{z}_{k}-\hat{z}_{k+1}\|^2).
	\end{align}
	Plugging \eqref{2:lem1:3} and \eqref{2:lem1:4} into \eqref{2:lem1:2}, we have
	\begin{align}\label{2:lem1:5}
	&~\lambda_{k+1} \langle z_{k+1} - z, F(z_{k+1})\rangle \\
	\leqslant& -\frac{1}{24}\|z_{k+1}-\hat{z}_{k}\|^2  + \frac{1}{2}(\|\hat{z}_{k}-z_0\|^2 - \|\hat{z}_{k+1}-z_0\|^2 ) +  \langle z_0 - z,\hat{z}_{k}-\hat{z}_{k+1}\rangle. \nonumber
	\end{align}
	The proof is then completed by summing the above inequality for $k$ from $1$ to $s$ and using $\hat{z}_1 = z_0$.
\end{proof}

The following lemma  further gives bounds for the sequences $\{z_k\}$ and $\{\hat{z}_k\}$ generated by Algorithm \ref{alg:1}.

\begin{lem}\label{2:lem2}
		Suppose that Assumptions \ref{ass:func} and \ref{ass:smoothl} hold. Let $\{z_k\}$ and $\{\hat{z}_k\}$ be the sequences generated by Algorithm \ref{alg:1}. Assuming that $\|z_{k+1}-\hat{z}_k\| \neq 0 $, set $\lambda_{k+1}=\frac{c_k}{H_k\|z_{k+1}-\hat{z}_k\|}$ with $c_k$ being a constant. If in addition $\frac{1}{33}\leqslant c_k \leqslant \frac{1}{13}$, then for any $z \in \mathbb{R}^{m}\times \mathbb{R}^{n}$ and $s \geqslant 1$, we have
	\begin{align}
	\sum_{k=1}^{s} \lambda_{k+1} \langle z_{k+1} - z, F(z_{k+1})\rangle &\leqslant \frac{1}{2}\|z_0-z\|^2, \label{2:lem2:1}\\ 
	\sum_{k=1}^s  \|z_{k+1} - \hat{z}_{k}\|^2 &\leqslant 12 \|z_0-z^*\|^2, \label{2:lem2:2}\\
	\|z_{s+1}-z^*\|\leqslant 7\|z_0-z^*\|, &~\|\hat{z}_{s+1}-z^*\|\leqslant 3\|z_0-z^*\|, \label{2:lem2:3}
	\end{align}
	where $z^*=(x^*,y^*)$ is an optimal solution of \eqref{P}. 
\end{lem}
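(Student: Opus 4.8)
The plan is to use Lemma \ref{2:lem1} as the master inequality and obtain all three bounds by specializing the free vector $z$ and applying elementary completing-the-square manipulations. The structural facts I would exploit throughout are that $z^*$ is a saddle point of the convex-concave function $f$, so $F(z^*)=0$, and that convex-concavity makes $F$ monotone, i.e. $\langle z-z', F(z)-F(z')\rangle \geqslant 0$. I also note that the term $-\frac{1}{24}\sum_{k=1}^s \|z_{k+1}-\hat{z}_k\|^2$ on the right-hand side of \eqref{2:lem1:1} is non-positive, so it may be discarded when only an upper bound is wanted and retained after rearrangement when the sum itself is to be bounded.

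For \eqref{2:lem2:1}, I would start from \eqref{2:lem1:1}, drop the non-positive last term, and observe that with $a:=z_0-\hat{z}_{s+1}$ and $b:=z_0-z$ the remaining right-hand side equals $-\frac{1}{2}\|a\|^2+\langle b,a\rangle$. The identity
\begin{equation*}
	-\tfrac{1}{2}\|a\|^2+\langle b,a\rangle = \tfrac{1}{2}\|b\|^2-\tfrac{1}{2}\|a-b\|^2 \leqslant \tfrac{1}{2}\|b\|^2
\end{equation*}
then yields the claimed bound $\frac{1}{2}\|z_0-z\|^2$. For \eqref{2:lem2:2}, I would instead set $z=z^*$ in \eqref{2:lem1:1}: monotonicity of $F$ together with $F(z^*)=0$ gives $\langle z_{k+1}-z^*, F(z_{k+1})\rangle = \langle z_{k+1}-z^*, F(z_{k+1})-F(z^*)\rangle \geqslant 0$, so (since each $\lambda_{k+1}>0$) the entire left-hand side of \eqref{2:lem1:1} is non-negative. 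Keeping the sum term this time and applying the same completing-the-square bound with $b=z_0-z^*$ gives
\begin{equation*}
	\tfrac{1}{24}\sum_{k=1}^s \|z_{k+1}-\hat{z}_k\|^2 \leqslant -\tfrac{1}{2}\|a\|^2+\langle b,a\rangle \leqslant \tfrac{1}{2}\|z_0-z^*\|^2,
\end{equation*}
which after multiplying by $24$ is exactly \eqref{2:lem2:2}.

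For \eqref{2:lem2:3}, I would extract from the same non-negativity (with $z=z^*$, discarding the sum term) the inequality $\frac{1}{2}\|a\|^2 \leqslant \langle b,a\rangle \leqslant \|b\|\,\|a\|$, whence $\|z_0-\hat{z}_{s+1}\| \leqslant 2\|z_0-z^*\|$; a triangle inequality then gives $\|\hat{z}_{s+1}-z^*\| \leqslant 3\|z_0-z^*\|$, and since \eqref{2:lem1:1} holds for every $s\geqslant 1$ (with $\hat{z}_1=z_0$ covering the base case) this yields $\|\hat{z}_k-z^*\| \leqslant 3\|z_0-z^*\|$ for all $k$. For the iterate bound I would isolate the single term $\|z_{s+1}-\hat{z}_s\|^2 \leqslant \sum_{k=1}^s \|z_{k+1}-\hat{z}_k\|^2 \leqslant 12\|z_0-z^*\|^2$ from \eqref{2:lem2:2}, so $\|z_{s+1}-\hat{z}_s\| \leqslant 2\sqrt{3}\,\|z_0-z^*\|$, and combine with the $\hat{z}_s$ bound via the triangle inequality to get $\|z_{s+1}-z^*\| \leqslant (2\sqrt{3}+3)\|z_0-z^*\| \leqslant 7\|z_0-z^*\|$. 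The one step that requires care is the invocation of monotonicity of $F$ and the saddle-point identity $F(z^*)=0$: this is the only place where problem structure beyond Lemma \ref{2:lem1} enters, and it is precisely what makes the sign of the left-hand side of \eqref{2:lem1:1} favorable. Everything else is bookkeeping with the completing-the-square identity and the triangle inequality.
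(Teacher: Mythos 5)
Your proposal is correct and follows essentially the same route as the paper's proof: both specialize the master inequality of Lemma \ref{2:lem1} at a generic $z$ (completing the square to get \eqref{2:lem2:1}), then at $z=z^*$ using $\langle z_{k+1}-z^*, F(z_{k+1})\rangle \geqslant 0$ to get \eqref{2:lem2:2} and the bound $\|z_0-\hat{z}_{s+1}\|\leqslant 2\|z_0-z^*\|$, and finish with triangle inequalities. The only differences are cosmetic: the paper uses Young's inequality where you use Cauchy--Schwarz directly, and it routes the bound on $\|z_{s+1}-z^*\|$ through $\|z_{s+1}-z_0\|\leqslant 6\|z_0-z^*\|$ rather than through $\|\hat{z}_s-z^*\|\leqslant 3\|z_0-z^*\|$, yielding the same constant $7$.
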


\begin{proof}
	By Lemma \ref{2:lem1} and  the Cauchy-Schwarz inequality,  we have
	\begin{align}\label{2:lem2:4}
	\sum_{k=1}^{s} \lambda_{k+1} \langle z_{k+1} - z, F(z_{k+1})\rangle &\leqslant \frac{1}{2}\|z_0-z\|^2 - \frac{1}{24} \sum_{k=1}^s \|z_{k+1} - \hat{z}_{k}\|^2.
	\end{align}
	Then, \eqref{2:lem2:1} holds directly by \eqref{2:lem2:4}. 
	Under the  convex-concave setting, by the optimal condition of $z^*$, we get
	\begin{equation}\label{2:lem2:5}
	\langle z_{k+1} - z^*, F(z_{k+1})\rangle \geqslant 0.
	\end{equation}
	Then, \eqref{2:lem2:2} holds by combining \eqref{2:lem2:4} with $z=z^*$ and \eqref{2:lem2:5}.  On the other hand, by setting $z=z^*$ in \eqref{2:lem2:4} and using \eqref{2:lem2:5},  and by the Cauchy-Schwarz inequality and Young's inequality, we have
	\begin{align*}
	0 &\leqslant~ \sum_{k=1}^{s} \lambda_{k+1} \langle z_{k+1} - z^*, F(z_{k+1})\rangle + \frac{1}{24} \sum_{k=1}^s \|z_{k+1} - \hat{z}_{k}\|^2 \\
	&\leqslant -\frac{1}{2}\|\hat{z}_{s+1}-z_{0}\|^{2} + \langle z_0 - \hat{z}_{s+1}, z_0 - z^*\rangle \\
	&\leqslant -\frac{1}{2}\|\hat{z}_{s+1}-z_{0}\|^{2} + \frac{1}{4}\| z_0 - \hat{z}_{s+1}\|^2 + \|z_0 - z^*\|^2,
	\end{align*}
	which implies that $\|\hat{z}_{s+1}-z_0\|  \leqslant 2\|z_0-z^*\|$ for any $s \geqslant 1$. Then, by \eqref{2:lem2:2} and the fact that $\hat{z}_1=z_0$, we further have 
	\begin{align*}
	\|z_{s+1}-z_0\| \leqslant \|z_{s+1}-\hat{z}_{s}\| + \|\hat{z}_{s}-z_0\| \leqslant 6 \|z_0-z^*\|.
	\end{align*}
	Therefore, by the Cauchy-Schwarz inequality, we have $\|\hat{z}_{s+1}-z^*\|\leqslant \|\hat{z}_{s+1}-z_0\| +  \|z_0-z^*\|  \leqslant 3\|z_0-z^*\|$ and $\|z_{s+1}-z^*\|  \leqslant \|z_{s+1}-z_0\| +  \|z_0-z^*\| \leqslant 7\|z_0-z^*\|$.
	The proof is then completed.
\end{proof}

We then provide a technical lemma that establishes a lower bound for $\sum_{k=1}^s \lambda_{k+1}$, which is key to proving the final iteration complexity.

\begin{lem}\label{2:lem3}
	Suppose that Assumptions \ref{ass:func} and \ref{ass:smoothl} hold. Let $\{z_{k,i}\}$, $\{z_k\}$ and $\{\hat{z}_k\}$ be the sequences generated by Algorithm \ref{alg:1}. Assuming that $\|z_{k+1}-\hat{z}_k\|\neq 0$, set $\lambda_{k+1}=\frac{c_k}{H_k\|z_{k+1}-\hat{z}_k\|}$ with $c_k$ being a constant. If in addition $\frac{1}{33}\leqslant c_k \leqslant \frac{1}{13}$, then for any  $s \geqslant 1$, we have $H_s\leqslant 2\rho$ and
	\begin{align}\label{2:lem3:1}
	\sum_{k=1}^s \lambda_{k+1} \geqslant \frac{s^{3/2}}{66\sqrt{3}H_{s}\|z_0 - z^*\|},
	\end{align}
	where $z^*=(x^*,y^*)$ is an optimal solution of \eqref{P} and $\rho$ is the local Hessian Lipschitz constant on the compact set $\mathcal{C}:= conv\{z^*, z_{k,i}, z_k, \hat{z}_k, i,k=1, 2,\cdots\}$, which is the convex hull generated by $z^*$ and the iterated point sequence.
\end{lem}
\begin{proof}
	By Assumption \ref{ass:func} and \eqref{step2b},  it is easy to see that for any $k,i \geqslant 1$,
	\begin{align*}
         6H_{k,i} \|z_{k,i}-\hat{z}_{k}\|^3 
         =& -\left\langle z_{k,i}-\hat{z}_{k}, DF(\hat{z}_{k})(z_{k,i}-\hat{z}_{k})\right\rangle- \langle F(\hat{z}_{k}), z_{k,i}-\hat{z}_{k} \rangle \\
         \leqslant&~ \|F(\hat{z}_{k})\|\|z_{k,i}-\hat{z}_{k}\|,
	\end{align*}
    which implies that
    \begin{equation*}
    	\|z_{k,i}-z^*\| \leqslant \|z_{k,i}-\hat{z}_{k}\| + \| \hat{z}_{k}-z^*\| \leqslant \sqrt{\frac{\|F(\hat{z}_{k})\|}{6H_{k,i}}} + \| \hat{z}_{k}-z^*\| \leqslant \sqrt{\frac{\|F(\hat{z}_{k})\|}{6H_{0}}}+ \| \hat{z}_{k}-z^*\|.
    \end{equation*}
	Then, by further combining the above relation  with \eqref{2:lem2:3}, it follows that the sequences  $\{z_{k,i}\}$, $\{z_k\}$ and $\{\hat{z}_k\}$ are bounded.
	By Assumption \ref{ass:smoothl}, there exists $\rho > 0$ such that  for any $(x, y),(x', y') \in \mathcal{C}:= conv\{z^*, z_{k,i}, z_k, \hat{z}_k, i,k=1, 2,\cdots\}$, we have
	\begin{align}\label{localrho}
		\| \nabla^2 f(x, y) - \nabla^2 f(x', y')\| \leqslant \rho\|(  x - x', y - y' )\|.
	\end{align}	
	By induction and Step 2(c) in Algorithm \ref{alg:1}, we then show that 
	\begin{equation}\label{2:lem3:2}
		H_k \leqslant 2\rho, \forall~ k \geqslant 0.
	\end{equation}
	 Noting that  $H_0=\frac{\|DF(z_0)-DF(\tilde{z}_0)\|}{\|z_0-\tilde{z}_0\|}$ $(z_0 \neq \tilde{z}_0)$, we can easily see that $H_0 \leqslant \rho$. Suppose \eqref{2:lem3:2} holds for $k-1$.
	 Then, at the $k$-th iteration, for the case $i_k=1$, we have $H_{k}=H_{k,1}=H_{k-1} \leqslant 2\rho$.
	 For the case $i_k >1$, by Step 2(c) and \eqref{localrho}, we have 
	\begin{align*}
	    \frac{H_{k,i_k-1}}{2}\|z_{k,i_k-1}-\hat{z}_k\|^2 \leqslant\|F(z_{k,i_k-1}) - F(\hat{z}_k)- DF(\hat{z}_k)(z_{k,i_k-1}-\hat{z}_k)\| \leqslant \frac{\rho}{2}\|z_{k,i_k-1}-\hat{z}_k\|^2,
	\end{align*}
	which means $H_k = H_{k,i_k}=2H_{k,i_k-1} \leqslant 2\rho$.
	Thus, we have $H_k \leqslant 2\rho$ and it finishes the induction.
	
	Note that $H_k \leqslant H_{k+1}$ by Step 2 in Algorithm \ref{alg:1}. By further combining with the choice that $\lambda_{k+1}H_k\|z_{k+1}-\hat{z}_{k}\| \geqslant \frac{1}{33}$, we have
	\begin{align*}
	\bigg(\frac{1}{33H_{s}}\bigg)^2\sum_{k=1}^s (\lambda_{k+1})^{-2} &\leqslant \sum_{k=1}^s (\lambda_{k+1})^{-2}\bigg(\frac{1}{33H_k}\bigg)^2 
	\leqslant \sum_{k=1}^s \|z_{k+1} - \hat{z}_{k}\|^2.
	\end{align*}
	Then by \eqref{2:lem2:2} in Lemma \ref{2:lem2}, we have
	\begin{align}\label{2:lem3:3}
	\sum_{k=1}^s (\lambda_{k+1})^{-2} \leqslant \big(66\sqrt{3}H_{s}\|z_0 - z^\star\|\big)^2.
	\end{align}
	On the other hand, by using the H\"{o}lder inequality we obtain
	\begin{align}\label{2:lem3:4}
	s=\sum_{k=1}^s 1 = \sum_{k=1}^s \big((\lambda_{k+1})^{-2}\big)^{1/3} (\lambda_{k+1})^{2/3} \leqslant \bigg(\sum_{k=1}^s (\lambda_{k+1})^{-2}\bigg)^{1/3}\bigg(\sum_{k=1}^s \lambda_{k+1}\bigg)^{2/3}.
	\end{align}
	Combining \eqref{2:lem3:3} and \eqref{2:lem3:4}, we have
	\begin{align*}
	s \leqslant \big(66\sqrt{3}H_{s}\|z_0 - z^\star\|\big)^{2/3}\left(\sum_{k=1}^s \lambda_{k+1}\right)^{2/3},
	\end{align*}
	which completes the proof. 
\end{proof}

We are now ready to establish the iteration complexity for the LF-CR algorithm  to obtain an $\epsilon$-optimal solution of \eqref{P} with respect to  the restricted primal-dual gap and gradient norm. Let $\epsilon > 0$ be any given target accuracy, we denote  the first iteration index to achieve $\textsc{gap}(\bar{x}_{N},\bar{y}_{N}; \beta) \leqslant \epsilon$  or $\|F(\hat{z}_{N})\| = 0$ by
\begin{align*}
N(\epsilon):= \min\{N |~\textsc{gap}(\bar{x}_{N},\bar{y}_{N}; \beta) \leqslant \epsilon ~\text{or}~ \|F(\hat{z}_{N})\| = 0\},
\end{align*}
where $\textsc{gap}(x,y; \beta)$ is defined as in Definition \ref{def:reGAP}, and
\begin{align*}
\bar{x}_N = \tfrac{1}{\sum_{k=1}^N \lambda_{k+1}}\bigg(\sum_{k=1}^N \lambda_{k+1} x_{k+1}\bigg), \quad \bar{y}_N = \tfrac{1}{\sum_{k=1}^N \lambda_{k+1}}\bigg(\sum_{k=1}^N \lambda_{k+1} y_{k+1}\bigg)
\end{align*}
with $\beta$ being specified later.
In addition, we denote  the first iteration index to achieve $\|F(z_{k+1})\| \leqslant \epsilon$ by 
	\begin{align*}
   \bar{N}(\epsilon):= \min\{k | \|F(z_{k+1})\| \leqslant \epsilon\}.
\end{align*}
In the following theorem, we provide an upper bound on $N(\epsilon)$ and $\bar{N}(\epsilon)$.
\begin{thm}\label{2:thm1}
	Suppose that Assumptions \ref{ass:func} and \ref{ass:smoothl} hold. Let $\{z_k\}$ and $\{\hat{z}_k\}$ be the sequences generated by Algorithm \ref{alg:1}. Set $\beta = 7\|z_0-z^*\|$, $\lambda_{k+1}=\frac{c_k}{H_k\|z_{k+1}-\hat{z}_k\|}$ with $c_k$ being a constant. If $\frac{1}{33}\leqslant c_k \leqslant \frac{1}{13}$, then
	\begin{align}\label{2:thm1:1}
	N(\epsilon) \leqslant \left(\frac{7841\sqrt{3}\rho\|z_0-z^*\|^3}{\epsilon}\right)^{2/3} + 1, ~\bar{N}(\epsilon) \leqslant \frac{156\rho\|z_0-z^*\|^2}{\epsilon}+1,
	\end{align}
	where $z^*=(x^*,y^*)$ is an optimal solution of \eqref{P} and $\rho$ is defined in Lemma \ref{2:lem3}.
\end{thm}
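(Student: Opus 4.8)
The plan is to bound the restricted primal-dual gap at the averaged iterate $(\bar{x}_N,\bar{y}_N)$ by the $\lambda_{k+1}$-weighted variational-inequality sum that Lemma \ref{2:lem2} already controls, and then turn the resulting $\mathcal{O}(1/\sum_{k=1}^N\lambda_{k+1})$ decay into an iteration count via the lower bound of Lemma \ref{2:lem3}.

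First I would reduce the gap to the monotone operator $F$. Fix any $x\in\mathbb{B}_\beta(x^*)$ and $y\in\mathbb{B}_\beta(y^*)$, and write $z=(x,y)$. Since $\bar{x}_N,\bar{y}_N$ are $\lambda_{k+1}$-weighted averages of $x_{k+1},y_{k+1}$, convexity of $f(\cdot,y)$ and concavity of $f(x,\cdot)$ give, through Jensen's inequality,
\begin{align*}
	f(\bar{x}_N,y)-f(x,\bar{y}_N)\leq \frac{1}{\sum_{k=1}^N\lambda_{k+1}}\sum_{k=1}^N\lambda_{k+1}\big[f(x_{k+1},y)-f(x,y_{k+1})\big].
\end{align*}
Applying the first-order convexity/concavity inequalities at $(x_{k+1},y_{k+1})$ and using the definition \eqref{def:operator} of $F$, one gets the pointwise estimate $f(x_{k+1},y)-f(x,y_{k+1})\leq \langle z_{k+1}-z,F(z_{k+1})\rangle$. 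Taking the maximum over $x\in\mathbb{B}_\beta(x^*)$ and $y\in\mathbb{B}_\beta(y^*)$ (which decouples into a single maximum of $z$ over the product ball) and invoking \eqref{2:lem2:1} of Lemma \ref{2:lem2} yields
\begin{align*}
	\textsc{gap}(\bar{x}_N,\bar{y}_N;\beta)\leq \frac{1}{\sum_{k=1}^N\lambda_{k+1}}\,\max_{z}\tfrac{1}{2}\|z_0-z\|^2,
\end{align*}
where the maximum is over $\mathbb{B}_\beta(x^*)\times\mathbb{B}_\beta(y^*)$.

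Next I would estimate numerator and denominator explicitly. For $z$ in the product ball, $\|z-z^*\|\leq\sqrt{2}\beta$, so with $\beta=7\|z_0-z^*\|$ the triangle inequality gives $\|z_0-z\|\leq(1+7\sqrt{2})\|z_0-z^*\|$, hence $\tfrac{1}{2}\max_z\|z_0-z\|^2\leq\tfrac{(1+7\sqrt{2})^2}{2}\|z_0-z^*\|^2$. For the denominator, Lemma \ref{2:lem3} provides $\sum_{k=1}^N\lambda_{k+1}\geq N^{3/2}/(66\sqrt{3}H_N\|z_0-z^*\|)$ together with $H_N\leq2\rho$. Combining these gives
\begin{align*}
	\textsc{gap}(\bar{x}_N,\bar{y}_N;\beta)\leq \frac{66(99+14\sqrt{2})\sqrt{3}\,\rho\|z_0-z^*\|^3}{N^{3/2}}\leq \frac{7841\sqrt{3}\,\rho\|z_0-z^*\|^3}{N^{3/2}},
\end{align*}
after checking that $66(99+14\sqrt{2})\approx 7840.7<7841$. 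Setting the right-hand side $\leq\epsilon$ and solving for $N$ delivers the claimed bound, the extra $+1$ absorbing the ceiling when passing from the real threshold to the first integer $N$.

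The main obstacle is the first reduction: one must verify that the weighted average of the saddle-function differences is \emph{exactly} dominated by $\langle z_{k+1}-z,F(z_{k+1})\rangle$, so that it matches the quantity bounded in Lemma \ref{2:lem2}, and then that the maximum is taken over the correct product ball with $\beta=7\|z_0-z^*\|$ consistent with the a posteriori estimate $\|z_{N+1}-z^*\|\leq 7\|z_0-z^*\|$ in \eqref{2:lem2:3}. The remainder—propagating the absolute constant $66(99+14\sqrt{2})$ through the radius estimate and the substitution $H_N\leq2\rho$—is routine arithmetic.
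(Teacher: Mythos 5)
Your proposal is correct and follows essentially the same route as the paper's proof: the Jensen reduction of the gap to the weighted sum $\sum_k \lambda_{k+1}\langle z_{k+1}-z, F(z_{k+1})\rangle$, the bounds from Lemma \ref{2:lem2} (eq.~\eqref{2:lem2:1}) and Lemma \ref{2:lem3} with $H_N\leqslant 2\rho$, the radius estimate $\|z_0-z\|\leqslant(1+7\sqrt{2})\|z_0-z^*\|$ over the product ball, and the same final constant $66(99+14\sqrt{2})<7841$. The only cosmetic difference is that the paper evaluates the bound at iteration $N(\epsilon)-1$ and argues by the definition of $N(\epsilon)$, while you solve for the first $N$ meeting the threshold; both yield the identical estimate \eqref{2:thm1:1}.
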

\begin{proof}
	Under the convex-concave setting,  we have
	\begin{align}\label{2:thm1:2}
	f(x_{k+1}, y) - f(x, y_{k+1}) =&~ f(x_{k+1},y)-f(x_{k+1},y_{k+1})  +  f(x_{k+1},y_{k+1})- f(x, y_{k+1}) \nonumber \\
	\leqslant&~ \langle z_{k+1} - z, F(z_{k+1})\rangle.
	\end{align}
	Then, we further obtain
	\begin{align}\label{2:thm1:3}	
	f(\bar{x}_{N(\epsilon)-1}, y) - f(x, \bar{y}_{N(\epsilon)-1})
	\leqslant &~ \tfrac{1}{\sum_{k=1}^{N(\epsilon)-1} \lambda_{k+1}} \sum_{k=1}^{N(\epsilon)-1} \lambda_{k+1} \big(f(x_{k+1}, y) - f(x, y_{k+1})\big)\nonumber \\
	\leqslant &~ \tfrac{1}{\sum_{k=1}^{\bar{N}(\epsilon)-1} \lambda_{k+1}} \sum_{k=1}^{N(\epsilon)-1} \lambda_{k+1} \langle z_{k+1} - z, F(z_{k+1})\rangle.
	\end{align}
    According to the definition of $N(\epsilon)$ and Lemma \ref{2:add1}, for all $k \leqslant N(\epsilon)-1$, we have $\|z_{k+1}-\hat{z}_k\| \neq 0$.
	By further combining \eqref{2:lem2:1}, \eqref{2:lem3:1} and \eqref{2:thm1:3}, we get
	\begin{align}\label{2:thm1:4}	
	f(\bar{x}_{{N(\epsilon)-1}}, y) - f(x, \bar{y}_{{N(\epsilon)-1}}) \leqslant \frac{33\sqrt{3}H_{N(\epsilon)-1}\|z_0-z\|^2\|z_0-z^*\|}{({N(\epsilon)-1)}^{3/2}}.
	\end{align}
	By \eqref{2:lem2:3} and the definition of $\bar{z}_{N(\epsilon)-1} := (\bar{x}_{N(\epsilon)-1}, \bar{y}_{N(\epsilon)-1})$, we have $\|\bar{z}_{N(\epsilon)-1}-z^*\| \leqslant \beta$.
	By \eqref{def:reGAP:1} and \eqref{2:thm1:4}, we obtain
	\begin{align*}
	&~~\quad\textsc{gap}(\bar{x}_{N(\epsilon)-1},\bar{y}_{N(\epsilon)-1}; \beta) \nonumber \\
	&~ = \max_{x \in \mathbb{B}_\beta(x^\star),y \in \mathbb{B}_\beta(y^\star)} (f(\bar{x}_{N(\epsilon)-1}, y)-f(x, \bar{y}_{N(\epsilon)-1})) \nonumber \\
	&~ \leqslant \max_{x \in \mathbb{B}_\beta(x^\star),y \in \mathbb{B}_\beta(y^\star)} 	\frac{33\sqrt{3}H_{N(\epsilon)-1}\|z_0-z\|^2\|z_0-z^*\|}{(N(\epsilon)-1)^{3/2}} \nonumber\\
	&~ \leqslant \max_{x \in \mathbb{B}_\beta(x^\star),y \in \mathbb{B}_\beta(y^\star)}\frac{33\sqrt{3}H_{N(\epsilon)-1}(\|z_0-z^*\| + \sqrt{2} \beta)^2\|z_0-z^*\|}{(N(\epsilon)-1)^{3/2}} \nonumber \\
	&~= \frac{33\sqrt{3}(7\sqrt{2}+1)^2H_{N(\epsilon)-1}\|z_0-z^*\|^3}{(N(\epsilon)-1)^{3/2}}.
	\end{align*}
	Note that $H_{k} \leqslant 2\rho$ for any $k \geqslant 1$. Then by the definition of $N(\epsilon)$, we immediately have
	\begin{align*}
	N(\epsilon) \leqslant \left(\frac{33\sqrt{3}(7\sqrt{2}+1)^2H_{N(\epsilon)-1}\|z_0-z^*\|^3}{\epsilon}\right)^{2/3} + 1  \leqslant \left(\frac{7841\sqrt{3}\rho\|z_0-z^*\|^3}{\epsilon}\right)^{2/3} + 1,
	\end{align*}
which shows the first inequality of \eqref{2:thm1:1}.
    
    Next, we establish the iteration complexity of Algorithm \ref{alg:1} with respect to  the gradient norm. By the definition of $\bar{N}(\epsilon)$ and Lemma \ref{2:add1}, for all $k \leqslant \bar{N}(\epsilon)-1$, we have $\|z_{k+1}-\hat{z}_k\| \neq 0$.
    According to \eqref{2:add1:1} and \eqref{2:add1:2}, it follows that
    \begin{align*}
    	\|F(z_{k+1})\| \leqslant& \|F(z_{k+1}) - F(\hat{z}_{k})- DF(\hat{z}_{k})(z_{k+1}-\hat{z}_{k})\| + \|F(\hat{z}_{k}) + DF(\hat{z}_{k})(z_{k+1}-\hat{z}_{k})\| \\
    	\leqslant& \frac{H_k}{2}\|z_{k+1}-\hat{z}_{k}\|^2 + 6H_k\|z_{k+1}-\hat{z}_{k}\|^2=\frac{13H_k}{2}\|z_{k+1}-\hat{z}_{k}\|^2.
    \end{align*}
	Combining the above relation with \eqref{2:lem2:2}, we have
	\begin{align*}
			\sum_{k=1}^{\bar{N}(\epsilon)-1} \frac{2}{13H_k}\|F(z_{k+1})\|\leqslant \sum_{k=1}^{\bar{N}(\epsilon)-1} \|z_{k+1} - \hat{z}_{k}\|^2 \leqslant 12 \|z_0-z^*\|^2.
	\end{align*}
    Note that $H_{k} \leqslant 2\rho$ for all $k \geqslant 1$. Then by the definition of $\bar{N}(\epsilon)$, we immediately have
    \begin{align*}
    	\bar{N}(\epsilon) \leqslant \frac{156\rho\|z_0-z^*\|^2}{\epsilon}+1,
    \end{align*}
    which completes the proof.
\end{proof}
\begin{rem}
Note that for any $k\geqslant 1$, we have $H_0 \leqslant H_k \leqslant H_{k+1} \leqslant  2\rho$. At the $k$-th iteration, the number of line searches for $H_k$ is upper bounded by $i_k=1+\log_2\frac{H_k}{H_{k-1}}$. Then, we have $\sum_{k=1}^{N(\epsilon)}i_k= N(\epsilon)+\log_2\frac{H_{N(\epsilon)+1}}{H_0} \leqslant N(\epsilon)+\log_2 \frac{2\rho}{H_0}.$ Theorem \ref{2:thm1} implies that the total iteration complexity of the LF-CR algorithm to obtain an $\epsilon$-optimal solution to \eqref{P} with respect to the restricted primal-dual gap and gradient norm is upper bounded by $\mathcal{O}(\rho^{2/3}\|z_0-z^*\|^2\epsilon^{-2/3})$ and $\mathcal{O}(\rho\|z_0-z^*\|^2\epsilon^{-1})$, respectively.
\end{rem}
Compared to the Newton-MinMax algorithm in \citep{Lin2022ExplicitSM}, the LF-CR algorithm achieves the same  complexity for finding an $\epsilon$-optimal solution to \eqref{P} without requiring prior knowledge of the Lipschitz constant, assuming that the Hessian is locally Lipschitz continuous. 
Moreover, the LF-CR algorithm  has better iteration complexity with respect to the Lipschitz constant $\rho$ compared to the ASOM method in \citep{Jiang2024AdaptiveAO}, which has an iteration complexity of $\mathcal{O}\left((\ell^{4/3}\rho^{2/3}\|z_0-z^*\|^{2/3} + \rho^2 \|z_0-z^*\|^2 )\epsilon^{-2/3}\right)$ (resp. $\mathcal{O}\left((\ell\rho\|z_0-z^*\| + \rho^2 \|z_0-z^*\|^2 )\epsilon^{-1}\right)$)  in terms of the $\epsilon$-optimal solution of the restricted primal-dual gap (resp. gradient norm).

\section{A Fully Parameter-Free Cubic Regularization (FF-CR) Algorithm }
In this section, we further propose a fully parameter-free cubic regularization (FF-CR) algorithm that does not require any prior knowledge about the problem parameters. We show that the proposed algorithm achieves the complexity bound  $\mathcal{O}(\epsilon^{-2/3})$ with respect to the gradient norm in the absence of such information.

Based on the LF-CR algorithm, the proposed FF-CR algorithm further employs an accumulative regularization method to construct the subproblem and uses a ``guess-and-check" strategy to estimate $\|z_0-z^*\|$ step by step. Such accumulative regularization method and ``guess-and-check" strategy have also been used to design a first-order parameter-free gradient minimization algorithm for convex optimization problem in \citep{Lan2023OptimalAP}. 

Each iteration of the proposed FF-CR algorithm consists of two loops, the outer loop and the inner loop. 
In the outer loop, the estimated bound $D_t$ of $\|z_0-z^*\|$ iteratively increases. In the inner loop, for a given estimate $D_t$, it solves the accumulative regularization subproblem approximately without prior knowledge of the Lipschitz constants. More specifically, it performs the following two important algorithmic components at each inner loop: 

\begin{itemize}
	\item Compute an approximate solution $z_k^t$ of the following regularized minimax subproblem:
	\begin{align}\label{minmax:1}
		\min_{x\in\mathbb{R}^{m}} \max_{y\in\mathbb{R}^{n}} f_k^t(x,y):=f(x,y) + \dfrac{\sigma_k^t}{2}\|x - \bar{x}_k^t\|^2 - \dfrac{\sigma_k^t}{2}\|y - \bar{y}_k^t\|^2,
	\end{align}
	where $\bar{z}_k^t=  (1-\gamma_k^t) \bar{z}_{k-1}^t + \gamma_k^t z_{k-1}^t$ that is a convex combination of previous approximate solutions $\{z_0^t, \cdots, z_{k-1}^t\}$ and $\sigma_k^t$ is the regularization parameter, which will be specified later. The problem \eqref{minmax:1} is solved by running the LF-CR algorithm within $N_k^t$ times iteration.
	\item At $z_k^t$, search a pair of $(M_{k,i}^t,z_{k,i}^t)$ by backtracking such that: 
	\begin{align}\label{minmax:2}
		\|F_k^t(z_{k,i}^t) - F_k^t(z_k^t)- DF_k^t(z_k^t)(z_{k,i}^t-z_k^t)\| \leqslant \frac{M_{k,i}^t}{2}\|z_{k,i}^t-z_k^t\|^2,
	\end{align}
	where $z_{k,i}^t$ is a solution of the following nonlinear equation:
	\begin{align}\label{minmax:3}
		F_k^t(z_k^t) + DF_k^t(z_k^t)(z-z_k^t) + 6M_{k,i}^t\| z-z_k^t \|( z-z_k^t)=0.
	\end{align} 
\end{itemize} 

\begin{algorithm}[H]
	\caption{ Fully parameter-free cubic regularization (FF-CR) algorithm}
	\label{alg2}
	\begin{algorithmic}
		\STATE{\textbf{Step 1}: Input $z_0, \tilde{z}_0$; Set $t=0$.}
		\STATE{\textbf{Step 2}: Initialize $M_0$, $D_0$.}
			\STATE{\quad\textbf{(a)}: Set $i=1$, $M_{0,i}=\frac{\|DF(z_0)-DF(\tilde{z}_0)\|}{\|z_0-\tilde{z}_0\|} (z_0 \neq \tilde{z}_0)$.}
			\STATE{\quad\textbf{(b)}: Update $z_{0,i}$ such that it is a solution of the following nonlinear equation:	
				\begin{equation*}
					F(z_0)+ DF(z_0)(z-z_0)+ 6M_{0,i}\|z-z_0\|(z-z_0)=0.
			\end{equation*}}
			\STATE{\quad\textbf{(c)}: If 
				\begin{equation*}
					\|F(z_{0,i}) - F(z_0)- DF(z_0)(z_{0,i}-z_0)\| \leqslant \frac{M_{0,i}}{2}\|z_{0,i}-z_0\|^2,
				\end{equation*}
				\qquad\quad~$M_{0}=M_{0,i}$,  $D_0=\|z_{0,i}-z_0\|$; otherwise, set $M_{0,i+1}=2M_{0,i}$, $i=i+1, $ go to \\~\quad\quad\quad Step 2(b).}
		\STATE{\textbf{Step 3}: For  a given $D_t$, solve the accumulative regularized subproblem:}
		\STATE{\quad\textbf{(a)}: Set $\bar{z}_0^t=z_0^t=z_0$, $\sigma_{0}^t=0$, $H_0^t=M_0^t=M_0$, $k=1$. }
		\STATE{\quad\textbf{(b)}: Input $\sigma_k^t$. Update $\gamma_{k}^t,\bar{x}_k^t,\bar{y}_k^t$:
			\begin{align}\label{alg2:1}
				\gamma_k^t = 1 - \frac{\sigma_{k-1}^t}{\sigma_k^t}, \bar{x}_k^t =  (1-\gamma_k^t) \bar{x}_{k-1}^t + \gamma_k^t x_{k-1}^t,\bar{y}_k^t =  (1-\gamma_k^t) \bar{y}_{k-1}^t + \gamma_k^t y_{k-1}^t.
		\end{align}}
		\STATE{\quad\textbf{(c)}:  Running the {\bfseries LF-CR} algorithm within $N_k^t$ iterations to approximately solve
			\begin{align}
				\min_{x\in\mathbb{R}^{m}} \max_{y\in\mathbb{R}^{n}} f_k^t(x,y):=f(x,y) + \dfrac{\sigma_k^t}{2}\|x - \bar{x}_k^t\|^2 - \dfrac{\sigma_k^t}{2}\|y - \bar{y}_k^t\|^2,
			\end{align}
			\qquad\quad with $z_{k-1}^t$ as the initial point and $H_{k-1}^t$ as the initial guess of Hessian Lipschitz \\ \qquad\quad constant.			Denote the output of the LF-CR algorithm as $(z_k^t,H_k^t)$. }
		\STATE{\quad\textbf{(d)}: Find a pair $(M_{k,i}^t,z_{k,i}^t)$ to satisfy \eqref{minmax:2}:}
		\STATE{\quad\quad~\textbf{(i)}: Set $i = 1$. If $z_k^t \neq z_0^t$, set $M_{k,i}^t =$  $\max  \left\{M_{k-1}^t,\frac{\|D F(z_k^t)-DF(z_0^t)\|}{\|z_k^t-z_0^t\|}\right\}$
		 ; otherwise, \\\quad\quad\quad\quad set $M_{k,i}^t = M_{k-1}^t$.}
		\STATE{\quad\quad~\textbf{(ii)}: Update $z_{k,i}^t$ such that it is a solution of
			the following nonlinear equation:			
			\begin{align}\label{alg2:2}
				F_k^t(z_k^t) + DF_k^t(z_k^t)(z-z_k^t) + 6M_{k,i}^t\left\| z-z_k^t \right\|( z-z_k^t)=0.
		\end{align}} 
		\STATE{\quad\quad~\textbf{(iii)}: If
			\begin{align}\label{alg2:3}
				\|F_k^t(z_{k,i}^t) - F_k^t(z_k^t)- DF_k^t(z_k^t)(z_{k,i}^t-z_k^t)\| \leqslant \frac{M_{k,i}^t}{2}\|z_{k,i}^t-z_k^t\|^2,
			\end{align}
			\quad\quad\quad\quad  $M_k^t=M_{k,i}^t, \tilde{z}=z_{k,i}^t$; otherwise, set $M_{k,i+1}^t=2M_{k,i}^t, i=i+1, $ go to Step 3(d)(ii).}
		\STATE{\quad\textbf{(e)}: If $k \geqslant K_t$, set $(z_{t+1},M_{t+1})= (z_k^t,M_k^t)$; otherwise, set $k=k+1$, go to Step 3(b).}		
		\STATE{\textbf{Step 4}: If $\|F(z_{t+1})\| \leqslant \epsilon $, stop; otherwise, set $D_{t+1}=4D_{t}, t=t+1$, go to Step 3(a).}	
	\end{algorithmic}
\end{algorithm}

Note that \eqref{minmax:3} can be solved in a similar way to \eqref{nesubproblem}. The detailed algorithm is formally stated in Algorithm \ref{alg2}.

In the following subsection, we will establish the iteration complexity of the FF-CR algorithm to obtain an $\epsilon$-optimal solution of \eqref{P} with respect to  the gradient norm.

\subsection{Convergence analysis}
Before proving the iteration complexity of the FF-CR algorithm, we make the following assumption about $f(x,y).$
\begin{ass}\label{ass:smoothg}
	The function $f(x, y)$ is globally Hessian Lipschitz continuous.
\end{ass}
We then outline the basic idea of iteration complexity analysis. We first prove that once $D_t$ in Step 3 of Algorithm \ref{alg2} satisfies $D_t\geqslant\|z_0-z^*\|$, we can prove $\|\nabla f(x_{t+1}, y_{t+1})\| \leqslant \epsilon$ and the algorithm will stop. Otherwise, $D_t\leqslant 4\sqrt{\frac{12}{11}}\|z_0-z^*\|$. Then, we estimate the additional number of iterations for such bounded $D_t$,  and finally prove the iteration complexity of Algorithm \ref{alg2}. 

First, we prove the following lemma which shows some bounds on  $\sigma_k^t\|\bar{z}_k^t - (z_k^t)^*\|$ at the $t$-th iteration, where $(z_k^t)^*$ is the optimal solution of \eqref{minmax:1}. 

\begin{lem}\label{3:lem1}
	Let $\{z_k^t\}$ and $\{\bar{z}_k^t\}$ be the sequences generated by Algorithm \ref{alg2}. At the $t$-th iteration, for all $k \geqslant 1$, we have 
	\begin{align}
	&\|z_{k-1}^t - (z_k^t)^*\| \leqslant \|z_{k-1}^t - (z_{k-1}^t)^*\|, \label{3:lem1:1}\\
	&\sigma_k^t\|\bar{z}_k^t - (z_k^t)^*\|  \leqslant \sum_{i=1}^{k}(\sigma_{i-1}^t + \sigma_{i}^t)\|z_{i-1}^t - (z_{i-1}^t)^*\|,\label{3:lem1:2}
	\end{align} 
	where $(z_k^t)^* = \arg\min_{x}\max_{y}f_k^t(x,y):=f(x,y) + \dfrac{\sigma_k^t}{2}\|x - \bar{x}_k^t\|^2 - \dfrac{\sigma_k^t}{2}\|y - \bar{y}_k^t\|^2.$
\end{lem}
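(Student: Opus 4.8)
The plan is to work entirely with the \emph{exact} saddle points $(z_k^t)^*$ of the regularized subproblems, so the approximate nature of the $z_k^t$ produced by the LF-CR subroutine never enters. The starting observation is that the operator associated with $f_k^t$ is $F_k^t(z) = F(z) + \sigma_k^t(z - \bar{z}_k^t)$; since $F$ is monotone under the convex--concave assumption, $F_k^t$ is $\sigma_k^t$-strongly monotone and satisfies $F_k^t((z_k^t)^*) = 0$. The one structural identity I would record before anything else is the accumulative relation for the centers: substituting $\gamma_k^t = 1 - \sigma_{k-1}^t/\sigma_k^t$ into the update $\bar{z}_k^t = (1-\gamma_k^t)\bar{z}_{k-1}^t + \gamma_k^t z_{k-1}^t$ gives $\sigma_k^t \bar{z}_k^t = \sigma_{k-1}^t \bar{z}_{k-1}^t + (\sigma_k^t - \sigma_{k-1}^t) z_{k-1}^t$, together with the boundary facts $\sigma_0^t = 0$ and $\bar{z}_1^t = z_0^t$ (and the monotonicity $0 \leqslant \sigma_{k-1}^t \leqslant \sigma_k^t$, which is forced by $\gamma_k^t \in [0,1]$).

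For \eqref{3:lem1:1} I would first evaluate $F_k^t$ at the previous solution. Using the optimality conditions $F((z_k^t)^*) + \sigma_k^t((z_k^t)^* - \bar{z}_k^t) = 0$ and $F((z_{k-1}^t)^*) + \sigma_{k-1}^t((z_{k-1}^t)^* - \bar{z}_{k-1}^t) = 0$ to substitute for $F((z_{k-1}^t)^*)$, and then eliminating the centers through the accumulative relation, everything collapses to the clean expression $F_k^t\big((z_{k-1}^t)^*\big) = (\sigma_k^t - \sigma_{k-1}^t)\big((z_{k-1}^t)^* - z_{k-1}^t\big)$. Feeding this into the strong-monotonicity inequality $\langle F_k^t((z_{k-1}^t)^*),\, (z_{k-1}^t)^* - (z_k^t)^*\rangle \geqslant \sigma_k^t \|(z_{k-1}^t)^* - (z_k^t)^*\|^2$ and abbreviating $a = (z_{k-1}^t)^* - z_{k-1}^t$, $b = (z_k^t)^* - z_{k-1}^t$, a Cauchy--Schwarz bound on the cross term reduces the inequality to the factored form $(\|a\| - \|b\|)(\sigma_{k-1}^t\|a\| - \sigma_k^t\|b\|) \leqslant 0$. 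Since $0 \leqslant \sigma_{k-1}^t \leqslant \sigma_k^t$, the second factor cannot be nonnegative whenever the first is negative, which forces $\|b\| \leqslant \|a\|$, that is, exactly \eqref{3:lem1:1}.

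For \eqref{3:lem1:2} I would induct on $k$, the base case $k=1$ being immediate from $\bar{z}_1^t = z_0^t$, $\sigma_0^t = 0$, and \eqref{3:lem1:1}. The engine of the induction is the three-term identity $\sigma_k^t(\bar{z}_k^t - (z_k^t)^*) = \sigma_{k-1}^t(\bar{z}_{k-1}^t - (z_{k-1}^t)^*) + \sigma_{k-1}^t((z_{k-1}^t)^* - z_{k-1}^t) + \sigma_k^t(z_{k-1}^t - (z_k^t)^*)$, obtained by inserting the accumulative center relation into $\sigma_k^t \bar{z}_k^t$ and regrouping. Taking norms, applying the triangle inequality, and bounding the last term via $\sigma_k^t\|z_{k-1}^t - (z_k^t)^*\| \leqslant \sigma_k^t\|z_{k-1}^t - (z_{k-1}^t)^*\|$ (which is \eqref{3:lem1:1}) gives the one-step recursion $\sigma_k^t\|\bar{z}_k^t - (z_k^t)^*\| \leqslant \sigma_{k-1}^t\|\bar{z}_{k-1}^t - (z_{k-1}^t)^*\| + (\sigma_{k-1}^t + \sigma_k^t)\|z_{k-1}^t - (z_{k-1}^t)^*\|$; substituting the inductive hypothesis into the first term telescopes to the claimed sum.

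The main obstacle I anticipate is purely algebraic rather than conceptual: establishing the two key identities---$F_k^t((z_{k-1}^t)^*) = (\sigma_k^t - \sigma_{k-1}^t)((z_{k-1}^t)^* - z_{k-1}^t)$ and the three-term decomposition of $\sigma_k^t(\bar{z}_k^t - (z_k^t)^*)$---requires carefully eliminating the centers $\bar{z}_k^t$ and $\bar{z}_{k-1}^t$ through the accumulative relation, and a single sign error there would destroy the factorization in \eqref{3:lem1:1}. Once these identities are verified, the remaining steps (strong monotonicity, Cauchy--Schwarz, and the telescoping induction) are routine. I would also remark that the argument uses only $F((z_0^t)^*) = 0$, so no uniqueness of the unregularized saddle point of $f$ is required.
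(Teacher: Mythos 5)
Your proof is correct and follows essentially the same route as the paper's: both rest on the accumulative center relation $\sigma_k^t\bar{z}_k^t=\sigma_{k-1}^t\bar{z}_{k-1}^t+(\sigma_k^t-\sigma_{k-1}^t)z_{k-1}^t$, the optimality conditions of the two consecutive regularized saddle points, and strong monotonicity to prove \eqref{3:lem1:1}, and your three-term identity plus telescoping for \eqref{3:lem1:2} is exactly the paper's \eqref{3:lem1:9}. The only divergence is the finish of \eqref{3:lem1:1}: the paper works with the modulus $\gamma_k^t\sigma_k^t$ and an exact polarization identity to get the slightly stronger bound $\|(z_{k-1}^t)^*-(z_k^t)^*\|^2+\|z_{k-1}^t-(z_k^t)^*\|^2\leqslant\|z_{k-1}^t-(z_{k-1}^t)^*\|^2$, whereas you use the full modulus $\sigma_k^t$, Cauchy--Schwarz, and a sign analysis of the factorization $(\|a\|-\|b\|)(\sigma_{k-1}^t\|a\|-\sigma_k^t\|b\|)\leqslant 0$ --- both are valid, yours needing only the implicit (and satisfied) condition $\sigma_k^t>0$ for $k\geqslant 1$.
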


\begin{proof}
	By \eqref{alg2:1} in Algorithm \ref{alg2}, we get
	\begin{align}\label{3:lem1:3}
	(z_k^t)^*
	=&~\arg\min_{x}\max_{y} f(x,y) + \dfrac{\sigma_k^t}{2}\|x - \bar{x}_k^t\|^2 - \dfrac{\sigma_k^t}{2}\|y - \bar{y}_k^t\|^2 \nonumber \\
	=&~\arg\min_{x}\max_{y}  f(x,y) + \dfrac{\sigma_{k-1}^t}{2}\|x - \bar{x}_{k-1}^t\|^2 + \dfrac{\sigma_k^t - \sigma_{k-1}^t}{2}\|x - x_{k-1}^t\|^2  \nonumber\\
	&~ - \dfrac{\sigma_{k-1}^t}{2}\|y - \bar{y}_{k-1}^t\|^2- \dfrac{\sigma_k^t - \sigma_{k-1}^t}{2}\|y - y_{k-1}^t\|^2 \nonumber \\
	=&~\arg\min_{x}\max_{y} f_{k-1}^t(x,y) + \frac{\gamma_{k}^t\sigma_k^t}{2}\|x - x_{k-1}^t\|^2 - \frac{\gamma_{k}^t\sigma_k^t}{2}\|y - y_{k-1}^t\|^2. 
	\end{align}
	By the optimality condition of $(z_k^t)^*$ in \eqref{3:lem1:3}, we then have
	\begin{align}\label{3:lem1:4}
	F_{k-1}^t\big((z_k^t)^*\big) + \gamma_{k}^t\sigma_k^t \big((z_k^t)^*-z_{k-1}^t\big) = 0.
	\end{align}
	Under the convex-concave setting, $f_{k-1}^t(x,y) + \frac{\gamma_{k}^t\sigma_k^t}{2}\|x - x_{k-1}^t\|^2 - \frac{\gamma_{k}^t\sigma_k^t}{2}\|y - y_{k-1}^t\|^2$ is a $\gamma_{k}^t\sigma_k^t$ strongly convex-strongly concave function. Then, by further combining with \eqref{3:lem1:4}, we have
	\begin{align}\label{3:lem1:5}
	&~\gamma_{k}^t\sigma_k^t \|(z_{k-1}^t)^* - (z_k^t)^* \|^2  \nonumber \\
	\leqslant &~ \left\langle F_{k-1}^t\big((z_{k-1}^t)^*\big) + \gamma_{k}^t\sigma_k^t \big((z_{k-1}^t)^*-z_{k-1}^t\big)-F_{k-1}^t\big((z_k^t)^*\big) - \gamma_{k}^t\sigma_k^t \big((z_k^t)^*-z_{k-1}^t\big), \right. \nonumber \\
	&~ \left. (z_{k-1}^t)^* - (z_k^t)^*  \right\rangle \nonumber \\
	= &~\left\langle F_{k-1}^t\big((z_{k-1}^t)^*\big) + \gamma_{k}^t\sigma_k^t \big((z_{k-1}^t)^*-z_{k-1}^t\big), (z_{k-1}^t)^* - (z_k^t)^*  \right\rangle.
	\end{align}
	By the definition of $(z_{k-1}^t)^*$, we get $F_{k-1}^t\big((z_{k-1}^t)^*\big)=0$. 
	Then, we further have
	\begin{align*}
	&~\gamma_{k}^t\sigma_k^t \|(z_{k-1}^t)^* - (z_k^t)^* \|^2 \\ \leqslant &~ \gamma_{k}^t\sigma_k^t \left\langle   (z_{k-1}^t)^*-z_{k-1}^t, (z_{k-1}^t)^* - (z_k^t)^* \right\rangle \\
	= &~ \frac{\gamma_{k}^t\sigma_k^t}{2}\left(\|(z_{k-1}^t)^*-z_{k-1}^t\|^2 + \|(z_{k-1}^t)^* - (z_k^t)^*\|^2 - \|z_{k-1}^t-(z_k^t)^*\|^2\right),
	\end{align*}
	which implies that
	\begin{align}\label{3:lem1:6}
	\|(z_{k-1}^t)^* - (z_k^t)^* \|^2 + \|z_{k-1}^t-(z_{k}^t)^*\|^2 \leqslant \|z_{k-1}^t-(z_{k-1}^t)^*\|^2.
	\end{align}
	Then, \eqref{3:lem1:1} holds directly from \eqref{3:lem1:6}.
	Denoting $\alpha_k^t:=\sigma_k^t - \sigma_{k-1}^t$ and noting that $\gamma_k^t =  1 - \frac{\sigma_{k-1}^t}{\sigma_k^t}=\frac{\alpha_k^t}{\sigma_k^t}$, we can rewrite the definition of $\bar{z}_k^t$ in \eqref{alg2:1} of Algorithm \ref{alg2} to
	\begin{align}\label{3:lem1:7}
	\sigma_k^t \bar{z}_k^t = \sigma_{k-1}^t\bar{z}_{k-1}^t + \alpha_k^t z_{k-1}^t.
	\end{align}
	By $\sigma_0^t=0$, $\alpha_k^t:=\sigma_k^t - \sigma_{k-1}^t$ and \eqref{3:lem1:7}, we have
	\begin{align}\label{3:lem1:8}
	\sigma_k^t = \sum_{i=1}^{k}\alpha_i^t, ~\sigma_k^t \bar{z}_k^t = \sum_{i=1}^{k}\alpha_i^t z_{i-1}^t.
	\end{align}
	Then, by \eqref{3:lem1:8}, we obtain
	\begin{align}\label{3:lem1:9}
	&~\sigma_k^t\big(\bar{z}_k^t - (z_k^t)^*\big) \nonumber\\
	=~ & \sum_{i=1}^{k}\alpha_i^t\big(z_{i-1}^t- (z_k^t)^*\big) \nonumber\\
	=~ & \alpha_k^t\big(z_{k-1}^t - (z_k^t)^*\big) + \sum_{i=1}^{k-1}\alpha_i^t\big(z_{i-1}^t - (z_{k-1}^t)^*\big)  + \left(\sum_{i=1}^{k-1}\alpha_i^t\right)\big((z_{k-1}^t)^* - (z_k^t)^*\big)
	\nonumber\\
	=~ & \alpha_k^t\big(z_{k-1}^t - (z_k^t)^*\big) + \sigma_{k-1}^t\big(\bar{z}_{k-1}^t - (z_{k-1}^t)^*\big)
	+ \sigma_{k-1}^t\big((z_{k-1}^t)^* - z_{k-1}^t\big) \nonumber\\
	&+ \sigma_{k-1}^t\big(z_{k-1}^t - (z_k^t)^*\big)
	\nonumber\\
	=~ & \sigma_k^t\big(z_{k-1}^t - (z_k^t)^*\big) + \sigma_{k-1}^t\big(\bar{z}_{k-1}^t - (z_{k-1}^t)^*\big) + \sigma_{k-1}^t\big((z_{k-1}^t)^* - z_{k-1}^t\big).
	\end{align} 
	By \eqref{3:lem1:9} and \eqref{3:lem1:1}, we get
	\begin{align*}
	&~\sigma_k^t\|\bar{z}_k^t - (z_k^t)^*\|  \\
	\leqslant&~ \sigma_k^t\|z_{k-1}^t - (z_k^t)^*\| + \sigma_{k-1}^t \|\bar{z}_{k-1}^t - (z_{k-1}^t)^*\| + \sigma_{k-1}^t \|(z_{k-1}^t)^* - z_{k-1}^t\| \\
	\leqslant&~ (\sigma_{k-1}^t + \sigma_{k}^t )\|z_{k-1}^t - (z_{k-1}^t)^*\| + \sigma_{k-1}^t \|\bar{z}_{k-1}^t - (z_{k-1}^t)^*\|.
	\end{align*}
	Then, \eqref{3:lem1:2} is proved by combining with $\sigma_0^t=0$.
\end{proof}

The following lemma gives an upper bound of the gradient norm at the point $z_k^t$ by using the distances between $z_i^t$ and $(z_i^t)^*$, $i = 0,\cdots, k$.

\begin{lem}\label{3:lem2}
	Suppose that Assumptions \ref{ass:func} and \ref{ass:smoothg} hold. Let $\{z_k^t\}$ and $\{\bar{z}_k^t\}$ be the sequences generated by Algorithm \ref{alg2}. At the $t$-th iteration, for all $k \geqslant 1$, we have
	\begin{align}\label{3:lem2:1}
	\|F(z_k^t)\| \leqslant 
	&~ \bigg(\sqrt{\frac{12}{11}} + \frac{72}{11}\bigg) M_k^t \|z_k^t-(z_k^t)^*\|^2 + 2 \sqrt{\frac{12}{11}} M_k^t \bigg(\sum_{i=1}^{k-1}\|z_i^t-(z_i^t)^*\|\bigg)\|z_k^t-(z_k^t)^*\| \nonumber \\
	&~ + \sqrt{\frac{12}{11}} M_k^t \|z_0^t - (z_0^t)^*\|\|z_k^t-(z_k^t)^*\| + \bigg(\sqrt{\frac{12}{11}} +1\bigg) \sigma_{k}^t\|z_k^t-(z_k^t)^*\|  \nonumber\\
	&~ + \sqrt{\frac{12}{11}}\|DF(z_0^t)\|\|z_k^t-(z_k^t)^*\| + \sigma_1^t\|z_0^t -(z_0^t)^*\| \nonumber\\
	&~ + \sum_{i=2}^{k}(\sigma_{i-1}^t + \sigma_{i}^t)\|z_{i-1}^t - (z_{i-1}^t)^*\|.
	\end{align}
\end{lem}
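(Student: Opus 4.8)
The plan is to transfer the estimate from the original operator $F$ to the regularized operator $F_k^t$ of the subproblem \eqref{minmax:1}. Differentiating $f_k^t$ gives $F_k^t(z) = F(z) + \sigma_k^t(z - \bar{z}_k^t)$, so at the computed point $z_k^t$ one has $F(z_k^t) = F_k^t(z_k^t) - \sigma_k^t(z_k^t - \bar{z}_k^t)$ and therefore $\|F(z_k^t)\| \leqslant \|F_k^t(z_k^t)\| + \sigma_k^t\|z_k^t - \bar{z}_k^t\|$. I would bound these two terms separately: the first will produce the $M_k^t$-dependent terms of \eqref{3:lem2:1}, and the second the remaining $\sigma$-dependent terms.

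For the second term I would insert $(z_k^t)^*$ and use $\sigma_k^t\|z_k^t - \bar{z}_k^t\| \leqslant \sigma_k^t\|z_k^t - (z_k^t)^*\| + \sigma_k^t\|(z_k^t)^* - \bar{z}_k^t\|$. The first piece is exactly the ``$+1$'' in the coefficient $(\sqrt{12/11}+1)$ of the term $\sigma_k^t\|z_k^t-(z_k^t)^*\|$, while the second piece is bounded directly by \eqref{3:lem1:2} of Lemma \ref{3:lem1}, which (since $\sigma_0^t=0$) is precisely $\sigma_1^t\|z_0^t-(z_0^t)^*\| + \sum_{i=2}^k(\sigma_{i-1}^t+\sigma_i^t)\|z_{i-1}^t-(z_{i-1}^t)^*\|$, namely the last two terms of \eqref{3:lem2:1}.

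The core is bounding $\|F_k^t(z_k^t)\|$. Rather than using $F_k^t((z_k^t)^*)=0$ directly, I would exploit the accepted backtracking pair $(M_k^t, z_{k,i}^t)$. Equation \eqref{alg2:2} gives $F_k^t(z_k^t) = -DF_k^t(z_k^t)(z_{k,i}^t - z_k^t) - 6M_k^t\|z_{k,i}^t-z_k^t\|(z_{k,i}^t-z_k^t)$, hence $\|F_k^t(z_k^t)\| \leqslant 6M_k^t\|z_{k,i}^t-z_k^t\|^2 + \|DF_k^t(z_k^t)\|\,\|z_{k,i}^t-z_k^t\|$. To control $\|DF_k^t(z_k^t)\| = \|DF(z_k^t)+\sigma_k^t I\|$ I would use the initialization of $M_{k,i}^t$ in Step 2(d)(i), which forces the secant bound $\|DF(z_k^t)-DF(z_0^t)\| \leqslant M_k^t\|z_k^t-z_0^t\|$, so that $\|DF_k^t(z_k^t)\| \leqslant \|DF(z_0^t)\| + M_k^t\|z_k^t-z_0^t\| + \sigma_k^t$; and I would bound $\|z_k^t-z_0^t\|$ by telescoping $z_k^t - z_0^t = \sum_{i=1}^k (z_i^t - z_{i-1}^t)$ and applying \eqref{3:lem1:1} to each step, which yields $\|z_k^t-z_0^t\| \leqslant \|z_k^t-(z_k^t)^*\| + 2\sum_{i=1}^{k-1}\|z_i^t-(z_i^t)^*\| + \|z_0^t-(z_0^t)^*\|$. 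Substituting these, together with the step estimate below, reproduces terms one through five of \eqref{3:lem2:1}, the coefficient $72/11 = 6\cdot\frac{12}{11}$ arising from the squared step and the factors $\sqrt{12/11}$ from the single step factor.

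The main obstacle is the cubic-step estimate $\|z_{k,i}^t - z_k^t\| \leqslant \sqrt{12/11}\,\|z_k^t-(z_k^t)^*\|$, which converts the $M_k^t$-terms from powers of the step length into powers of the distance to the subproblem solution; I would prove it using only \eqref{alg2:2}, the acceptance condition \eqref{alg2:3}, and the monotonicity of $F_k^t$, so that $\rho$ never enters the bound (it is needed only to guarantee that the backtracking terminates). Combining \eqref{alg2:2} and \eqref{alg2:3} gives $F_k^t(z_{k,i}^t) = -6M_k^t\|z_{k,i}^t-z_k^t\|(z_{k,i}^t-z_k^t) + e$ with $\|e\|\leqslant \frac{M_k^t}{2}\|z_{k,i}^t-z_k^t\|^2$. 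Testing the monotonicity inequality $\langle F_k^t(z_{k,i}^t),\, z_{k,i}^t-(z_k^t)^*\rangle \geqslant 0$ then yields, writing $s := z_{k,i}^t - z_k^t$ and $w := z_{k,i}^t - (z_k^t)^*$, the estimate $\langle s, w\rangle \leqslant \frac{1}{12}\|s\|\,\|w\|$; since $z_k^t-(z_k^t)^* = w - s$, a completion of squares in $\|w\|$ gives $\|z_k^t-(z_k^t)^*\|^2 = \|w\|^2 - 2\langle s,w\rangle + \|s\|^2 \geqslant \frac{143}{144}\|s\|^2$, which is stronger than the required $\|s\|^2 \leqslant \frac{12}{11}\|z_k^t-(z_k^t)^*\|^2$. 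Carrying the exact constants through the several triangle inequalities is then routine, and assembling the three blocks completes the bound \eqref{3:lem2:1}.
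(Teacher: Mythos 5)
Your proposal is correct and follows essentially the same route as the paper: the same splitting $\|F(z_k^t)\| \leqslant \|F_k^t(z_k^t)\| + \sigma_k^t\|z_k^t-(z_k^t)^*\| + \sigma_k^t\|\bar{z}_k^t-(z_k^t)^*\|$ with Lemma \ref{3:lem1} handling the $\sigma$-terms, the same use of \eqref{alg2:2}, the secant initialization of $M_{k,i}^t$, and the telescoping bound via \eqref{3:lem1:1}, and the same key step estimate obtained by testing monotonicity of $F_k^t$ at the accepted backtracking point against $(z_k^t)^*$. The only difference is cosmetic: the paper derives $\|z_{k,i}^t-z_k^t\|^2 \leqslant \frac{12}{11}\|z_k^t-(z_k^t)^*\|^2$ via Cauchy--Schwarz and polarization, while your completion of squares gives the slightly sharper constant $\frac{144}{143}$, which you then relax to match the stated coefficients.
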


\begin{proof}
	By Step 3(d) in Algorithm \ref{alg2}, we have 
	\begin{align}
	&F_k^t(z_k^t) + DF_k^t(z_k^t)(\tilde{z}-z_k^t) + 6M_{k}^t \|\tilde{z}-z_k^t\|(\tilde{z}-z_k^t) = 0,\label{3:lem2:2}\\
	&\|F_k^t(\tilde{z}) - F_k^t(z_k^t)- DF_k^t(z_k^t)(\tilde{z}-z_k^t)\| \leqslant \frac{M_k^t}{2}\|\tilde{z}-z_k^t\|^2. \label{3:lem2:3}
	\end{align}
	Under the convex-concave setting, by the optimality condition of $(z_k^t)^*$, we further have
	\begin{align*}
	0 \leqslant&~ \langle F_k^t(\tilde{z}), \tilde{z}-(z_k^t)^* \rangle \nonumber \\
	= &~ \langle F_k^t(\tilde{z}) - F_k^t(z_k^t) - DF_k(z_k)(\tilde{z}-z_k^t), \tilde{z}-(z_k^t)^*\rangle \nonumber \\
	&~ + \langle F_k^t(z_k^t) + DF_k^t(z_k^t)(\tilde{z}-z_k^t), \tilde{z}-(z_k^t)^*\rangle \nonumber \\
	\leqslant&~ \frac{M_k^t}{2}\|\tilde{z}-z_k^t\|^2\|\tilde{z}-(z_k^t)^*\| - 6M_k^t\|\tilde{z}-z_k^t\|\langle \tilde{z}-z_k^t, \tilde{z}-(z_k^t)^*\rangle,
	\end{align*}
	which implies that 
	\begin{align}\label{3:lem2:4}
	0 \leqslant \|\tilde{z}-z_k^t\|\|\tilde{z}-(z_k^t)^*\| - 12\langle \tilde{z}-z_k^t, \tilde{z}-(z_k^t)^*\rangle.
	\end{align}
	By \eqref{3:lem2:4} and the Cauchy-Schwarz inequality, we obtain
	\begin{align*}
	0 \leqslant&~ \frac{1}{2}\|\tilde{z}-z_k^t\|^2 + \frac{1}{2}\|\tilde{z}-(z_k^t)^*\|^2 - 6\big(\|\tilde{z}-z_k^t\|^2 + \|\tilde{z}-(z_k^t)^*\|^2 - \|z_k^t - (z_k^t)^*\|^2\big) \nonumber \\
	=&~ -\frac{11}{2}\|\tilde{z}-z_k^t\|^2 - \frac{11}{2}\|\tilde{z}-(z_k^t)^*\|^2 + 6\|z_k^t - (z_k^t)^*\|^2,
	\end{align*}
	which implies that
	\begin{align}\label{3:lem2:5}
	\|\tilde{z}-z_k^t\|^2 \leqslant \frac{12}{11} \|z_k^t - (z_k^t)^*\|^2.
	\end{align}
	Then, by \eqref{3:lem2:5} and \eqref{3:lem2:2}, we have
	\begin{align}\label{3:lem2:6}
	\|F_k^t(z_k^t)\| =&~ \|DF_k^t(z_k^t)(\tilde{z}-z_k^t) +6M_k^t\|\tilde{z}-z_k^t\|(\tilde{z}-z_k^t)\| \nonumber\\
	\leqslant &~ \|DF_k^t(z_k^t)\|\|\tilde{z}-z_k^t\| + 6M_k^t\|\tilde{z}-z_k^t\|^2 \nonumber\\
	\leqslant &~ \sqrt{\frac{12}{11}}\|DF_k^t(z_k^t)\|\|z_k^t-(z_k^t)^*\|+\frac{72M_k^t}{11}\|z_k^t-(z_k^t)^*\|^2.
	\end{align}
	By combining \eqref{3:lem2:6} with $DF_k^t(z_k^t)=DF(z_k^t)+\sigma_k^t I_{m+n}$ and $\frac{\|D F(z_k^t)-DF(z_0^t)\|}{\|z_k^t-z_0^t\|} \leqslant M_{k,i}^t \leqslant M_k^t$ in Step 3(d) of Algorithm \ref{alg2}, we have
	\begin{align}\label{3:lem2:7}
	\|F_k^t(z_k^t)\| 
	\leqslant&~ \sqrt{\frac{12}{11}}\|DF(z_k^t) + \sigma_k^t I_{m+n} - DF(z_0^t) +DF(z_0^t) \|\|z_k^t-(z_k^t)^*\| \nonumber\\
	&~+ \frac{72M_k^t}{11}\|z_k^t-(z_k^t)^*\|^2 \nonumber \\
	\leqslant&~ \sqrt{\frac{12}{11}}M_k^t\|z_k^t-z_0^t\|\|z_k^t-(z_k^t)^*\| + \sqrt{\frac{12}{11}}\big(\sigma_k^t + \|DF(z_0^t)\|\big)\|z_k^t-(z_k^t)^*\| \nonumber \\
	&~+ \frac{72M_k^t}{11}\|z_k^t-(z_k^t)^*\|^2.
	\end{align}
	Next, we are ready to give an upper bound on $\|F(z_k^t)\|$. By combining \eqref{3:lem2:7} with
	\begin{align*}
	\|F(z_k^t)\| = \|F_k^t(z_k^t) -\sigma_{k}^t(z_k^t - \bar{z}_k^t)\| 
	\leqslant \|F_k^t(z_k^t)\| + \sigma_{k}^t\|z_k^t - (z_k^t)^*\| + \sigma_{k}^t\|\bar{z}_k^t - (z_k^t)^*\|,
	\end{align*}
	we have
	\begin{align}\label{3:lem2:8}
	\|F(z_k^t)\| \leqslant &~\sqrt{\frac{12}{11}}M_k^t\|z_k^t-z_0^t\|\|z_k^t-(z_k^t)^*\| + \sqrt{\frac{12}{11}}(\sigma_k^t + \|DF(z_0^t)\|)\|z_k^t-(z_k^t)^*\| \nonumber \\
	&~+ \frac{72M_k^t}{11}\|z_k^t-(z_k^t)^*\|^2 + \sigma_{k}^t\|z_k^t - (z_k^t)^*\| + \sigma_{k}^t\|\bar{z}_k^t - (z_k^t)^*\|.
	\end{align}
	By \eqref{3:lem1:1}, we can easily prove that 
	\begin{align} \label{3:lem2:9}
	&~\|z_k^t - z_0^t\| \nonumber\\
	=&~\|z_k^t-(z_k^t)^* + (z_k^t)^*-z_{k-1}^t+z_{k-1}^t-(z_{k-1}^t)^* + \cdots + z_1^t-(z_1^t)^*+(z_1^t)^*-z_0^t\| \nonumber\\ 
	\leqslant&~ \|z_k^t-(z_k^t)^*\| + \sum_{i=1}^{k-1}2\|z_i^t-(z_i^t)^*\| + \|z_0^t -(z_0^t)^*\|.
	\end{align}
	By plugging \eqref{3:lem2:9} and \eqref{3:lem1:2} into \eqref{3:lem2:8}, the proof is then completed. 
\end{proof}

By applying the convergence properties of the LF-CR algorithm, the following lemma shows that $\|F(z_{t+1})\| \leqslant \epsilon$ when $D_t \geqslant \|z_0-z^*\|$.
\begin{lem}\label{3:lem3}
	Suppose that Assumptions \ref{ass:func} and \ref{ass:smoothg} hold. Let $\{z_k^t\}$ and $\{\bar{z}_k^t\}$ be the sequences generated by Algorithm \ref{alg2}. Set 
	\begin{align*}
	&\sigma_k^t = \frac{\epsilon}{41D_t}4^k, 
	N_k^t =  \left\lceil \left( \frac{33\sqrt{3}8^{3-k}H_k^tD_t}{\sigma_{k}^t}\right)^{2/3}  \right\rceil,  \\
	&K_t= \left\lceil \max \left\{\log_{64}\frac{32M_k^tD_t^2}{\epsilon}, \log_8\frac{8M_k^tD_t^2}{\epsilon},\log_8 \frac{4\sqrt{12/11}\|DF(z_0)\|D_t}{\epsilon} \right\}\right\rceil.
	\end{align*}    	
	At the $t$-th iteration, if $D_t \geqslant \|z_0-z^*\|$, then we have $\|F(z_{t+1})\| \leqslant \epsilon$,  and $\|z_{t+1}-z^*\| \leqslant  \frac{135}{56}\|z_0-z^*\|$,
	where $z^*=(x^*,y^*)$ is an optimal solution of \eqref{P}.
\end{lem}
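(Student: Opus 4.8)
The plan is to show that, under the hypothesis $D_t \ge \|z_0 - z^*\|$, every inner subproblem $f_k^t$ is solved accurately enough that the errors $e_k := \|z_k^t - (z_k^t)^*\|$ decay geometrically, and then to feed this decay into the master bound \eqref{3:lem2:1} of Lemma \ref{3:lem2}. First I would record the base case: since $\sigma_0^t = 0$ we have $f_0^t = f$, hence $(z_0^t)^* = z^*$ and $z_0^t = z_0$, so $e_0 = \|z_0 - z^*\| \le D_t$; this is the only place the hypothesis $D_t \ge \|z_0-z^*\|$ enters. I would also note that each $f_k^t$ differs from $f$ only by a quadratic, so it has the same Hessian-Lipschitz constant $\rho$ and is $\sigma_k^t$-strongly-convex-strongly-concave; in particular Lemma \ref{2:lem3} applies to the inner run and gives $H_k^t \le 2\rho$.

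The core is a per-subproblem contraction. Running the LF-CR algorithm on $f_k^t$ warm-started at $z_{k-1}^t$, I would invoke the gap estimate \eqref{2:thm1:4} from the proof of Theorem \ref{2:thm1}, but evaluated at the single point $z = (z_k^t)^*$ rather than maximized over a ball; this replaces $\|z_0-z\|^2\|z_0-z^*\|$ by $\|z_{k-1}^t-(z_k^t)^*\|^3$ and removes the $(7\sqrt2+1)^2$ factor, leaving the restricted gap of the inner run at most $33\sqrt3\,H_k^t\,\|z_{k-1}^t-(z_k^t)^*\|^3 / (N_k^t)^{3/2}$. Strong convexity–strong concavity lower-bounds this same gap by $\tfrac{\sigma_k^t}{2}\|z_k^t-(z_k^t)^*\|^2 = \tfrac{\sigma_k^t}{2}e_k^2$, and \eqref{3:lem1:1} bounds the warm-start distance by $e_{k-1}$. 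Combining these and substituting the prescribed $N_k^t$ and $\sigma_k^t$ collapses all problem constants and leaves a recursion of the form $e_k^2 \lesssim 8^k e_{k-1}^3/D_t$, from which I would prove by induction that $e_k \le D_t/8^k$ for $0 \le k \le K_t$.

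With geometric decay in hand I would substitute $e_k \le D_t/8^k$ and $e_0 \le D_t$ termwise into \eqref{3:lem2:1}. After bounding the inner geometric sum $\sum_{i<k}e_i \le \tfrac{D_t}{7}$, the curvature terms carrying $M_k^t$ or $\|DF(z_0)\|$ reduce to the three scalings $M_k^t D_t^2/64^k$, $M_k^t D_t^2/8^k$ and $\|DF(z_0)\|D_t/8^k$; requiring each to be $\lesssim\epsilon$ is exactly $k \ge \log_{64}(\cdot)$, $k \ge \log_8(\cdot)$ and $k \ge \log_8(\cdot)$, i.e.\ precisely the three branches of the maximum defining $K_t$. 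The remaining terms each pit a regularization weight against the decay, e.g.\ $\sigma_k^t e_k \sim \tfrac{\epsilon}{41}(4/8)^k$, $\sigma_1^t e_0 \le \tfrac{4\epsilon}{41}$ and $\sum_i(\sigma_{i-1}^t+\sigma_i^t)e_{i-1} \sim \tfrac{\epsilon}{41}\sum_i 2^{-i}$, so they sum to a geometric series controlled by a constant multiple of $\epsilon/41$; the factor $41$ in $\sigma_k^t$ is calibrated so the grand total is at most $\epsilon$. Since $z_{t+1} = z_{K_t}^t$ and $\|F(z)\| = \|\nabla f(x,y)\|$, this yields $\|F(z_{t+1})\| \le \epsilon$.

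I expect the inductive decay bound to be the main obstacle. The recursion $e_k^2 \lesssim 8^k e_{k-1}^3/D_t$ is superlinear but fights the geometrically growing prefactor $8^k$ (equivalently the growing $\sigma_k^t$), so maintaining $e_k \le D_t/8^k$ forces a delicate matching of the constants in $N_k^t$. A second subtlety is that for large $k$ the quantity inside the ceiling in $N_k^t$ drops below one, so $N_k^t = 1$ and the averaged-gap bound \eqref{2:thm1:4} degenerates; in that regime I would instead argue the contraction directly from the single cubic-regularized Newton step \eqref{alg2:2}–\eqref{alg2:3}, where the strong regularization makes the step contract quadratically and $e_k \le D_t/8^k$ is preserved with room to spare. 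Care is also needed to discharge the off-by-one between $N_k^t$ and the $(N_k^t-1)^{3/2}$ attached to the averaged iterate, which I would absorb into the slack of the ceiling.
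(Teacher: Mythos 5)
Your proposal is correct and follows essentially the same route as the paper's own proof: specializing \eqref{2:thm1:4} to the subproblem with $z=(z_k^t)^*$, converting the gap to $\|z_k^t-(z_k^t)^*\|^2$ via $\sigma_k^t$-strong convexity--concavity, bounding the warm-start distance by \eqref{3:lem1:1}, running the induction to get the $1/8$-contraction of $\|z_k^t-(z_k^t)^*\|$, and substituting the resulting geometric decay into \eqref{3:lem2:1} so that the three branches of $K_t$ kill the three curvature terms while the factor $41$ absorbs the regularization terms. The two obstacles you flag at the end are in fact non-issues: since $\lceil x^{2/3}\rceil^{3/2}\geqslant x$ for any $x>0$, the bound $(N_k^t)^{3/2}\geqslant 33\sqrt{3}\,8^{3-k}H_k^tD_t/\sigma_k^t$ holds even when $N_k^t=1$, and \eqref{2:thm1:4} is valid for every $s\geqslant 1$ with the paper's substitution $N(\epsilon)-1=N_k^t$, so no separate single-step argument or off-by-one correction is needed.
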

\begin{proof}
	By Step 3(c), $(x_k^t,y_k^t,H_k^t)$ is an output of the LF-CR algorithm within $N_k^t$ iterations and $f_k^t$ is $\rho$-Hessian Lipschitz continuous. 
	We consider two output case of Algorithm \ref{alg:1}, i.e., $\|F_k^t(z_k^t)\| \neq 0$ or $\|F_k^t(z_k^t)\| = 0$. For the case $\|F_k^t(z_k^t)\| \neq 0$, substituting $f=f_k^t, z=(z_k^t)^*$, $z^*=(z_k^t)^*, z_0=z_{k-1}^t, N(\epsilon)-1=N_k^t, H_{N(\epsilon)-1}=H_k^t$, \text{and}  $\bar{z}_{N(\epsilon)-1}=z_k^t$ into \eqref{2:thm1:4}, we have
	\begin{align}\label{3:lem3:1}
	f_k^t\big(x_k^t, (y_k^t)^*\big) - f_k^t\big((x_k^t)^*, y_k^t\big) \leqslant \frac{33\sqrt{3}H_k^t\|z_{k-1}^t-(z_k^t)^*\|^3}{(N_k^t)^{3/2}}.
	\end{align}
	Under the convex-concave setting, $f_k^t$ is $\sigma_k^t$ strongly convex-strongly concave. Then, we further have
	\begin{align}\label{3:lem3:2}
	\|z_k^t-(z_k^t)^*\|^2 &\leqslant \frac{33\sqrt{3}H_k^t\|z_{k-1}^t-(z_k^t)^*\|^3}{\sigma_k^t (N_k^t)^{3/2}} \leqslant \frac{33\sqrt{3}H_k^t\|z_{k-1}^t-(z_{k-1}^t)^*\|^3}{\sigma_k^t (N_k^t)^{3/2}},
	\end{align}
	where the second inequality is by \eqref{3:lem1:1}. For the case $\|F_k^t(z_k^t)\| = 0$, we know that $z_k^t$ = $(z_k^t)^*$, so \eqref{3:lem3:2} obviously holds.
	
	Next, by \eqref{3:lem3:2} and induction, we show that for any $k \geqslant 1$, we have
	\begin{align}\label{3:lem3:3}
	\|z_k^t-(z_k^t)^*\| \leqslant \frac{1}{8}\|z_{k-1}^t-(z_{k-1}^t)^*\|.
	\end{align}
	If $D_t \geqslant \|z_0-z^*\|$, combining \eqref{3:lem3:2} with the choice of $N_1^t$, $z_0^t=z_0$ and $ (z_0^t)^*=z^*$, we can easily prove that $\|z_1^t-(z_1^t)^*\| \leqslant \frac{1}{8}\|z_{0}-z^*\|$.
	Next, suppose that \eqref{3:lem3:3} holds for $k$. Then, we have
	\begin{align}\label{3:lem3:4}
	\|z_k^t-(z_k^t)^*\| \leqslant \frac{1}{8^k}\|z_0-z^*\|.
	\end{align}
	By  $D_t \geqslant \|z_0-z^*\|$, combining \eqref{3:lem3:2} with the choice of $N_{k+1}^t$ and \eqref{3:lem3:4}, we have 
	\begin{align*}
	\|z_{k+1}^t-(z_{k+1}^t)^*\|^2 \leqslant \frac{\|z_k^t-(z_k^t)^*\|^3}{8^{2-k}D_t} \leqslant \frac{\|z_k^t-(z_k^t)^*\|^2}{8^{2}},
	\end{align*}
	which implies that $\|z_{k+1}^t-(z_{k+1}^t)^*\| \leqslant \frac{1}{8}\|z_{k}^t-(z_{k}^t)^*\|$. 
	Next, we further bound the norm $\|F(z_k^t)\|$ in \eqref{3:lem2:1} by using \eqref{3:lem3:3}.
	By \eqref{3:lem3:3} and the choice of $\sigma_k^t$, we have 
	\begin{align}
	&\sum_{i=1}^{k-1}\|z_i^t-(z_i^t)^*\| \leqslant \sum_{i=1}^{k-1} \frac{1}{8^i}\|z_0-z^*\| 
	\leqslant \frac{1}{7}\|z_0-z^*\|, \label{3:lem3:5}\\
	&\sum_{i=2}^{k}(\sigma_{i}^t + \sigma_{i-1}^t)\|z_{i-1}^t-(z_{i-1}^t)^*\|
	= \sum_{i=2}^{k} 5 \sigma_{i-1}^t\frac{1}{8^{i-1}}\|z_0-z^*\| 
	\leqslant \frac{5\sigma_{1}^t}{4}\|z_0-z^*\|. \label{3:lem3:6}
	\end{align}
	By plugging \eqref{3:lem3:5} and \eqref{3:lem3:6} into \eqref{3:lem2:1}, we have
	\begin{align*}
	\|F(z_k^t)\| \leqslant&~ \bigg(\sqrt{\frac{12}{11}} + \frac{72}{11}\bigg) M_k^t \frac{1}{64^k}\|z_0-z^*\|^2 + (\frac{2}{7} +1) \sqrt{\frac{12}{11}} M_k^t\frac{1}{8^k} \|z_0-z^*\|^2 \nonumber \\
	&~+ \bigg(\sqrt{\frac{12}{11}} +1\bigg) 4^{k-1}\sigma_{1}^t\frac{1}{8^k}\|z_0-z^*\|  
	+ \sqrt{\frac{12}{11}}\frac{1}{8^k}\|DF(z_0)\|\|z_0-z^*\| \\
	&~+ \sigma_1^t\|z_0 -z^*\| +   \frac{5\sigma_{1}^t}{4}\|z_0-z^*\|\\
	\leqslant&~ 8M_k^t \frac{1}{64^k}\|z_0-z^*\|^2
	+ 2M_k^t\frac{1}{8^k}\|z_0-z^*\|^2 +\frac{41}{16}\sigma_1^t\|z_0 -z^*\| \\
	&~+ \sqrt{\frac{12}{11}}\frac{1}{8^k}\|DF(z_0)\|\|z_0-z^*\| ,
	\end{align*}
	where the second inequality is by $\sqrt{\frac{12}{11}} + \frac{72}{11} \leqslant 8, \left(\frac{2}{7}+1\right)\sqrt{\frac{12}{11}} \leqslant 2$ and $\left(\sqrt{\frac{12}{11}} +1\right) 4^{k-1}\frac{1}{8^k} \leqslant \left(\sqrt{\frac{12}{11}} +1\right)\frac{1}{8} \leqslant \frac{5}{16}$.
	By the choice of $K_t$ and $\sigma_{k}^t$, we have $\|F(z_{K_t}^t)\|\leqslant \epsilon$.

	Combining \eqref{3:lem2:9} with \eqref{3:lem3:4} and \eqref{3:lem3:5}, it follows that for any $k\geqslant 1$
		\begin{align*}
			\|z_k^t - z^*\| \leqslant \frac{1}{8}\|z_0-z^*\| + \frac{2}{7}\|z_0-z^*\|+ 2\|z_0-z^*\|=\frac{135}{56}\|z_0-z^*\|.
	\end{align*}
    The proof is then completed by Step 3(e) in Algorithm \ref{alg2}.
\end{proof}
We are now ready to establish the iteration complexity for the FF-CR algorithm. In particular, let $\epsilon > 0$ be any given target accuracy. We denote the first iteration index to achieve $\|F(z_t)\| \leqslant \epsilon$ by
\begin{align*}
T(\epsilon):= \min\{t |~\|F(z_t)\| \leqslant \epsilon \}.
\end{align*}
We provide an upper bound on $T(\epsilon)$ in the following theorem.
\begin{thm}\label{3:thm1}
	Suppose that  Assumptions \ref{ass:func} and \ref{ass:smoothg} hold. Let $\{z_t\}$ be a sequence generated by Algorithm \ref{alg2}. Set 
	\begin{align*}
	&\sigma_k^t = \frac{\epsilon}{41D_t}4^k, 
	N_k^t =  \left\lceil \left( \frac{33\sqrt{3}8^{3-k}H_k^tD_t}{\sigma_{k}^t}\right)^{2/3}  \right\rceil,  \\
	&K_t= \left\lceil \max \left\{\log_{64}\frac{32M_k^tD_t^2}{\epsilon}, \log_8\frac{8M_k^tD_t^2}{\epsilon},\log_8 \frac{4\sqrt{\frac{12}{11}}\|DF(z_0)\|D_t}{\epsilon} \right\}\right\rceil.
	\end{align*} 
	Then, we have  $D_t \leqslant 4 \sqrt{\frac{12}{11}}\|z_0-z^*\|$ for any $t\leqslant T(\epsilon)$ and 
	\begin{align*}
	T(\epsilon)\leqslant  \left \lceil\log_4 \frac{\|z_0-z^*\|}{D_0}\right\rceil,
	\end{align*}
	where $z^*=(x^*,y^*)$ is an optimal solution of \eqref{P} and $D_0 \leqslant \sqrt{\frac{12}{11}}\|z_0-z^*\|$.
\end{thm}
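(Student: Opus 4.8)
The plan is to derive both assertions purely from Lemma \ref{3:lem3} and the doubling rule $D_{t+1}=4D_t$ of Step 3, so that no new analytic estimate is needed. First I would record two elementary facts. Since $F(z)=(\nabla_x f,-\nabla_y f)$, the Euclidean norms agree, $\|\nabla f(x,y)\|=\|F(z)\|$, so the termination test in Step 3 is exactly $\|F(z_{t+1})\|\leqslant\epsilon$ and is governed by $T(\epsilon)$. Second, the doubling rule yields the closed form $D_t=4^tD_0$, which is strictly increasing and unbounded; combined with Lemma \ref{3:lem3} this already shows $T(\epsilon)$ is finite, because once $D_t\geqslant\|z_0-z^*\|$ the algorithm is forced to stop.

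The engine of the argument is the contrapositive of Lemma \ref{3:lem3}. At every outer iteration $t$ strictly before termination the stopping test has failed, i.e. $\|F(z_{t+1})\|>\epsilon$; since Lemma \ref{3:lem3} guarantees $\|F(z_{t+1})\|\leqslant\epsilon$ whenever $D_t\geqslant\|z_0-z^*\|$, we must have $D_t<\|z_0-z^*\|$ for every $t<T(\epsilon)$. This single inequality is the crux: it says the guessed bound never overshoots $\|z_0-z^*\|$ until the last step.

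Both conclusions then drop out by plugging in the closed form. For the first, take any $t\leqslant T(\epsilon)$: if $t<T(\epsilon)$ then $D_t<\|z_0-z^*\|\leqslant 4\|z_0-z^*\|$, while for $t=T(\epsilon)$ we use $T(\epsilon)-1<T(\epsilon)$ to get $D_{T(\epsilon)-1}<\|z_0-z^*\|$, whence $D_{T(\epsilon)}=4D_{T(\epsilon)-1}<4\|z_0-z^*\|$; this establishes $D_t\leqslant 4\|z_0-z^*\|$ throughout. For the complexity bound, again $D_{T(\epsilon)-1}<\|z_0-z^*\|$ reads $4^{T(\epsilon)-1}D_0<\|z_0-z^*\|$, so $T(\epsilon)-1<\log_4(\|z_0-z^*\|/D_0)$, giving $T(\epsilon)<1+\log_4(\|z_0-z^*\|/D_0)$; since $T(\epsilon)$ is an integer this rounds up to $T(\epsilon)\leqslant\lceil\log_4(\|z_0-z^*\|/D_0)\rceil$.

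The main obstacle is not analytic but bookkeeping: one must align the point index with the outer-loop index produced by Step 2(e) (the loop using $D_t$ outputs $z_{t+1}$), and keep track of strict versus non-strict inequalities. It is exactly the strictness of $D_{T(\epsilon)-1}<\|z_0-z^*\|$, passed through the ceiling, that trims the bound to $\lceil\log_4(\|z_0-z^*\|/D_0)\rceil$ and avoids a spurious extra iteration; a careless non-strict version would cost one additional term. I would therefore state the contrapositive step with explicit indices and double-check the endpoint $t=T(\epsilon)$ separately, as done above.
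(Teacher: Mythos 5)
Your overall route is the same as the paper's: the paper's (very terse) proof likewise combines Lemma \ref{3:lem3}, the doubling rule $D_t=4^tD_0$, and the definition of $T(\epsilon)$, so there is no divergence of method. The problem is that the step you yourself identify as the crux contains exactly the off-by-one error you claimed to be guarding against.

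You assert that ``$D_t<\|z_0-z^*\|$ for every $t<T(\epsilon)$.'' Under the indexing you set up (outer iteration $t$ uses $D_t$ and outputs $z_{t+1}$), the terminating outer iteration is $\tau=T(\epsilon)-1$, and the stopping test \emph{fails} only at outer iterations $t\leqslant T(\epsilon)-2$; at $t=T(\epsilon)-1$ the test \emph{succeeds}, so the contrapositive of Lemma \ref{3:lem3} gives no information whatsoever about $D_{T(\epsilon)-1}$ --- indeed the typical termination scenario is precisely $D_{T(\epsilon)-1}\geqslant\|z_0-z^*\|$. What the contrapositive legitimately yields is $D_t<\|z_0-z^*\|$ for $t\leqslant T(\epsilon)-2$, hence $4^{T(\epsilon)-2}D_0<\|z_0-z^*\|$, which gives $T(\epsilon)\leqslant\left\lceil\log_4\frac{\|z_0-z^*\|}{D_0}\right\rceil+1$, and $D_t=4D_{t-1}<4\|z_0-z^*\|$ only for $t\leqslant T(\epsilon)-1$ (note $D_{T(\epsilon)}$ is never even set, since the algorithm stops before Step 3 would define it). Your derivation of the theorem's exact constants rests on applying the failed-test inequality at the terminating iteration itself, which is invalid. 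To be fair, the paper's own one-line proof (``completed by using the definition of $T(\epsilon)$'') glosses over the same point: for instance, if $D_0<\|z_0-z^*\|\leqslant 4D_0$ the theorem claims $T(\epsilon)\leqslant 1$, yet Lemma \ref{3:lem3} only forces $\|F(z_2)\|\leqslant\epsilon$, not $\|F(z_1)\|\leqslant\epsilon$. So your conclusion coincides with the stated theorem, but the step that produces it does not hold; the honest version of this argument yields the bound with an additional $+1$ (equivalently, the stated bound is correct if $T(\epsilon)$ is read as counting outer loops rather than as the index of the output point).
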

\begin{proof}
	By Step 2 in  Algorithm \ref{alg2}, we initialize $D_0$ by a backtracking routine. According to a similar proof as in \eqref{3:lem2:5}, we have $D_0=\|z_{0,i}-z_0\|\leqslant \sqrt{\frac{12}{11}}\|z_0-z^*\|$.  Notice that  by Lemma \ref{3:lem3}, if $D_t\geqslant \|z_0-z^*\|$, we have $\|F(z_{t+1})\| \leqslant \epsilon$.
	The proof is then completed by using  the definition of $T(\epsilon)$ and Step 4 in Algorithm \ref{alg2} that $D_t = 4^{t}D_0$.
\end{proof}

\begin{rem}\label{rem2}
	Note that in the $t$-th iteration, similar to the proof in Lemma \ref{2:lem3}, for any $k\geqslant 1$, we have $H_0^t=M_0^t=M_0$, $H_{k-1}^t \leqslant H_k^t\leqslant2\rho$, and $M_{k-1}^t\leqslant M_k^t\leqslant2\rho$. Therefore, on the one hand, at iteration $k$, the number of line search steps for $M_k^t$ and $H_k^t$ is upper bounded by $1 + \log_2\frac{M_k^t}{M_{k-1}^t}$ and $N_k^t + \log_2\frac{H_k^t}{H_{k-1}^t}$, respectively. On the other hand,
	Theorem \ref{3:thm1} implies that 
	\begin{align}\label{rem2:1}
	&~\sum_{t=1}^{T(\epsilon)}\sum_{k=1}^{K_t} \left( N_k^t  + \log_2\frac{H_k^t}{H_{k-1}^t} + \log_2\frac{M_k^t}{M_{k-1}^t} +1\right) \nonumber\\
	\leqslant&~ 2\sum_{t=1}^{T(\epsilon)}K_t + \sum_{t=1}^{T(\epsilon)}\sum_{k=1}^{K_t} \left( \frac{33\sqrt{3}8^{3-k}H_k^tD_t}{\frac{\epsilon}{41D_t}4^k}\right)^{2/3} +  2\sum_{t=1}^{T(\epsilon)}\log_2 \frac{2\rho}{M_{0}} \nonumber\\
	\leqslant &~ 2\sum_{t=1}^{T(\epsilon)}K_t +  \sum_{t=1}^{T(\epsilon)} D_t^{4/3} \sum_{k=1}^{K_t} 2^{6-3k} \left( \frac{2706\sqrt{3}\rho}{\epsilon}\right)^{2/3} + 2\sum_{t=1}^{T(\epsilon)}\log_2 \frac{2\rho}{M_{0}} \nonumber\\
	\leqslant&~  T(\epsilon) \left\lceil \max \left\{\log_8\frac{280\rho \|z_0-z^*\|^2}{\epsilon},\log_8 \frac{18\|DF(z_0)\|\|z_0-z^*\|}{\epsilon} \right\}\right\rceil \nonumber\\
	&~+ \frac{512}{7} \left( \frac{2706\sqrt{3}\rho \|z_0-z^*\|^2}{\epsilon}\right)^{2/3} + 2T(\epsilon)\log_2 \frac{2\rho}{M_{0}},	
	\end{align}
	where the last inequality is by $\sum_{t=1}^{T(\epsilon)} D_t^{4/3} = \sum_{t=1}^{T(\epsilon)} D_0^{4/3}4^{4t/3} \leqslant 8 \|z_0-z^*\|^{4/3}$ and $\sum_{k=1}^{K_t} 2^{6-3k} \leqslant \frac{64}{7}. $
	Then, the iteration complexity of the FF-CR algorithm to obtain an $\epsilon$-optimal solution of \eqref{P} with respect to the gradient norm
	is upper bounded by $\mathcal{O}\big( \rho^{2/3}\|z_0-z^*\|^{4/3}\epsilon^{-2/3} \big)$.
\end{rem}
	\begin{rem}
		Compared with the restricted primal-dual gap termination criterion, the gradient norm is a more general termination criterion in optimization problems and is easy to compute in practice \citep{Yoon2021Accelerated,Chen2024Near}. This is particularly important for parameter-free optimization algorithms. 

Moreover, if the FF-CR algorithm obtains an $\epsilon$-optimal solution with respect to the gradient norm, then we can obtain an $\mathcal{O}(\epsilon D)$-optimal solution with respect to the restricted primal-dual gap after no more than $\mathcal{O}\left(\rho^{2/3}\|z_0-z^*\|^{4/3}\epsilon^{-2/3}\right)$ additional iterations of the FF-CR algorithm. We provide the following simple proof. Assume that $\hat{z}=(\hat{x},\hat{y})$ is an $\epsilon$-optimal solution with respect to the gradient norm. According to step 4 of Algorithm \ref{alg2}, i.e., $D_t = 4^{t}D_0$, after $\left\lceil\log_4 \frac{\|z_0-z^*\|}{D_0}\right\rceil $ additional outer iterations of the FF-CR algorithm, we obtain $D_t \geqslant \|z_0-z^*\|$. Then, according to Lemma \ref{3:lem3}, $z_{t+1}$ further satisfies $\|F(z_{t+1})\|\leqslant \epsilon$ and $\|z_{t+1}-z^*\|\leqslant D := \tfrac{135}{56}\|z_0-z^*\|$. Using these bounds, the restricted primal-dual gap can be estimated as
\begin{equation*}
	\textsc{gap}(x_{t+1},y_{t+1};D)\leqslant \max_{x \in \mathbb{B}_D(x^\star),y \in \mathbb{B}_D(y^\star)} \langle z_{t+1} - z, F(\hat{z})\rangle \leqslant 3D\|F(z_{t+1})\|\leqslant 3D \epsilon.
\end{equation*}
Thus, by further combining the bound \eqref{rem2:1}, $z_{t+1}$ becomes an $\mathcal{O}(\epsilon D)$-optimal solution with respect to the restricted primal-dual gap after no more than
$\mathcal{O}\left(\rho^{2/3}\|z_0-z^*\|^{4/3}\epsilon^{-2/3}\right)$
additional iterations of the FF-CR algorithm. This means that the gradient norm termination criterion is stronger than the primal-dual gap in this sense. 
\end{rem}

Note that although the homotopy-continuation cubic regularized Newton algorithm in \citep{Huang2022CubicRN} has the best iteration complexity results in terms of the gradient norm, additional assumptions of error bound conditions are needed. For problems without assuming such error bound conditions, we improve the complexity in \citep{Ostroukhov2020TensorMF} for the convex-concave setting by a logarithmic factor. Moreover, our method achieves better iteration complexity compared to the adaptive second-order optimistic method  in \citep{Jiang2024AdaptiveAO}, which has an iteration complexity of $\mathcal{O}\left((\ell\rho\|z_0-z^*\| + \rho^2 \|z_0-z^*\|^2 )\epsilon^{-1}\right)$ in terms of the gradient norm.

\section{Numerical experiment}
In this section, we compare the numerical performance of the proposed LF-CR algorithm and the FF-CR algorithm with the extragradient (EG) method \citep{Korpelevich1976TheEM}, the Newton-MinMax method \citep{Lin2022ExplicitSM}, the homotopy inexact proximal-Newton extragradient (HIPNEX) algorithm \citep{Marques2024AS}, the optimistic second-order method (OSOM) with line search  \citep{Jiang2022GeneralizedOM} and the parameter-free variant of the adaptive second-order optimistic method (ASOM) \citep{Jiang2024AdaptiveAO} in solving a synthetic minimax optimization problem and an AUC maximization problem. All methods are implemented using MATLAB R2017b on a laptop with Intel Core i5 2.8GHz and 4GB memory.

\subsection{ A synthetic minimax optimization problem}
We consider the following convex-concave minimax optimization problem \citep{Lin2022ExplicitSM}:
\begin{equation}\label{prob:cubic}
\min_{x\in\mathbb{R}^{n}} \max_{y\in\mathbb{R}^{n}} \ f(x, y) = \tfrac{\rho}{6}\|x\|^3 + \langle y, Ax - b\rangle, 
\end{equation}
where $\rho> 0$, the entries of $b \in \mathbb{R}^{n}$ are generated independently from $[-1, 1]$
and $A \in \mathbb{R}^{n \times n}$ is given by
\begin{align*}
	A = \begin{bmatrix}
	1 & -1 & & & \\
	& 1 & -1 & & \\
	& & \ddots & \ddots & \\
	& & & 1 & -1 \\
	& & & & 1
\end{bmatrix}
\end{align*} 
It can be verified that the function $f(x,y)$ is $\rho$-Hessian Lipschitz continuous, and admits a global saddle point $z^\star = (x^\star, y^\star)$ with $x^\star = A^{-1}b$ and $y^\star = -\frac{\rho}{2}\|x^\star\|(A^\top)^{-1}x^\star$. 

In our experiment, the problem parameters are chosen as $n \in \{50, 100\}, \rho = \frac{1}{15n}$.
Let $H_0$ be the initial estimate of Hessian  Lipschitz constant $\rho$ and $D_0$ be the initial estimate of $\|z_0-z^*\|$. For the LF-CR algorithm and the FF-CR algorithm, after selecting the initial point $z_0$, another random point $\tilde{z}_0$ close to the initial point is generated.
	Then, $H_0$ and $D_0$ are generated using the formulas in the two algorithms, and each cubic subproblem is calculated until a high accuracy is achieved.
	For the HIPNEX algorithm, we set $\hat{\sigma}=0$, $\theta=0.3$, and other hyperparameters are determined by formulas $(24)$ and $(25)$ in \cite{Marques2024AS}. For the OSOM algorithm, we set the initial step size $\sigma_0=1$ and the line search hyperparameters $\alpha=0.5$ and $\beta=0.3$. For the ASOM algorithm, we set $\lambda_0=H_0$. We run each algorithm until $\|F(z)\|\leqslant 1e-4.$

\begin{figure}[t]
	\centering
	\subfigure[$n$=50]{\includegraphics[width=0.48\textwidth,trim=20pt 2pt 5pt 50pt]{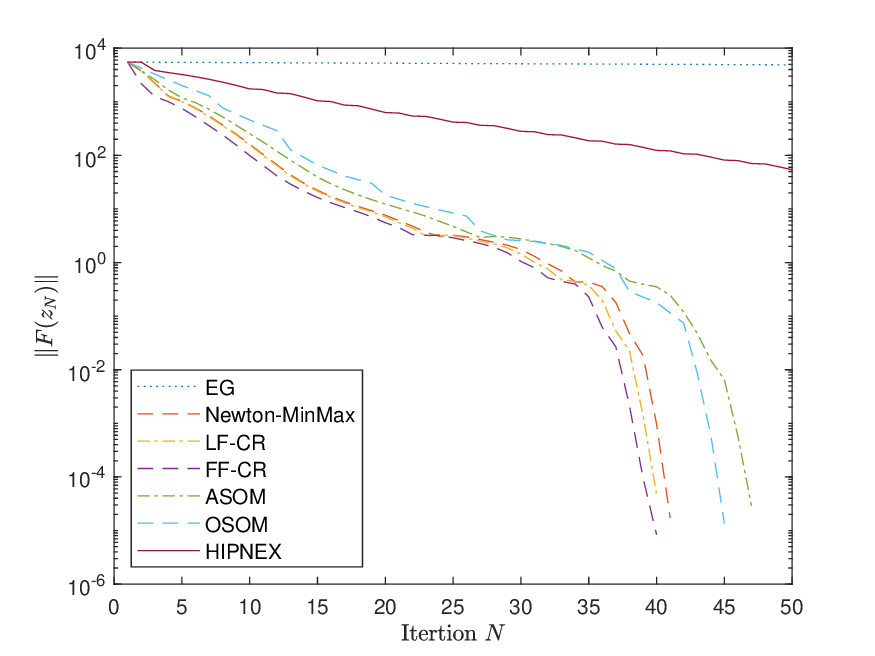}}
	\subfigure[$n$=100]{\includegraphics[width=0.48\textwidth,trim=20pt 2pt 5pt 50pt]{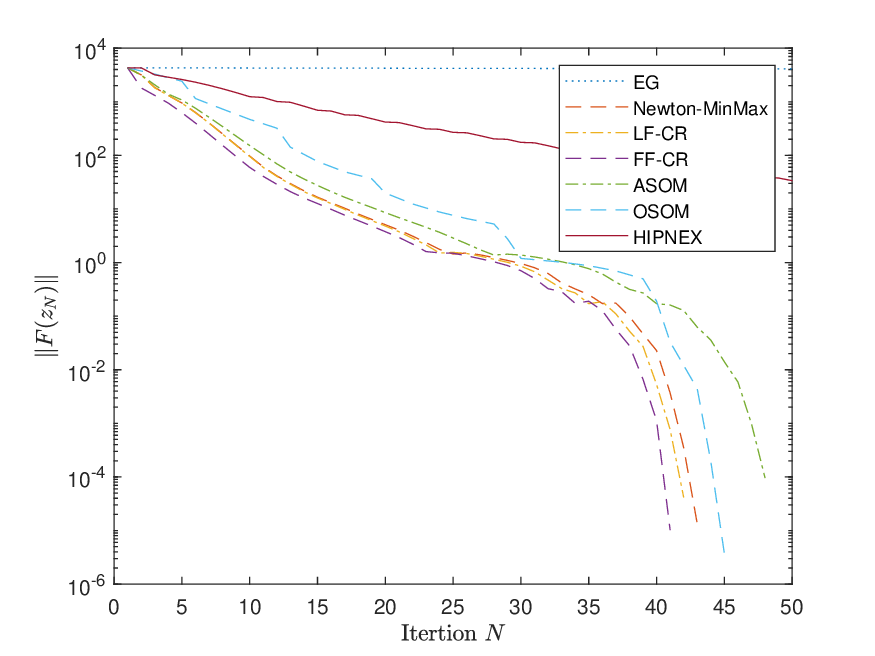}}
	\caption{Numerical results of the tested algorithms for solving synthetic minimax problem}
	\label{fig1}
\end{figure}

Figure \ref{fig1} shows the gradient norm descent curves of the seven algorithms. Obviously, the second-order methods perform better than the first-order method, i.e., the EG method. Among the second-order methods, the LF-CR algorithm and the FF-CR algorithm perform slightly better than the other methods.

\subsection{AUC maximization problem}
 We consider the following minimax formulation for  the area under the receiver operating characteristic curve (AUC) maximization which aims to  find a classifier $\theta \in \mathbb{R}^{n}$  that maximizes AUC score on a set of samples $\{(a_{i}, b_{i})\}_{i = 1}^{M}$, where $a_{i} \in \mathbb{R}^{n}$  and  $b_{i} \in \{1, -1\}$ \citep{Lin2022ExplicitSM,Ying2016StochasticOA,Shen2018TowardsM}:
\begin{align*}
	\min_{x=(\theta, u, v)} \max_{y} & \frac{1 - p}{M} \left\{\sum_{i = 1}^{M} (\langle\theta, a_{i}\rangle - u)^{2} \mathbb{I}_{[b_{i} = 1]}\right\} + \frac{p}{M} \left\{\sum_{i = 1}^{M} (\langle\theta, a_{i}\rangle - v)^{2} \mathbb{I}_{[b_{i} = -1]}\right\} \\
	& + \frac{2(1 + y)}{M} \left\{\sum_{i = 1}^{M} \langle\theta, a_{i}\rangle \bigg(p\mathbb{I}_{[b_{i} = -1]} - (1 - p)\mathbb{I}_{[b_{i} = 1]}\bigg)\right\} + \frac{\rho}{6} \|x\|^{3} - p(1 - p)y^{2},
\end{align*}
where $\rho > 0$ is a scalar, $u, v \in \mathbb{R}$ are auxiliary variables, $\mathbb{I}_{[\cdot]}$ is the indicator function, and $p$  is the proportion of samples with positive label. 

In our experiment, we choose $\rho = \frac{1}{M}$, which is the same as in \citep{Lin2022ExplicitSM}, and use two imbalanced binary classification datasets ``a9a'' $(n=126, M=32561)$, ``w8a'' $(n=300, M=49749)$ from LIBSVM datasets. For the LF-CR algorithm and the FF-CR algorithm,  we generate $H_0$, $M_0$ and $D_0$ using the formulas in both algorithms. For the HIPNEX algorithm, we set $\hat{\sigma}=0$ , $\theta=0.2$, and other hyper-parameters are determined by the formulas $(24)$ and $(25)$ in \cite{Marques2024AS}. For the OSOM algorithm, we set the initial stepsize $\sigma_0=1$, the line-search
hyperparameter $\alpha=0.5$ and $\beta=0.5.$ For the ASOM algorithm, we set $\lambda_0=H_0$.  We run each algorithm until $\|F(z)\|\leqslant 1e-10.$

Figure \ref{fig2} shows the numerical performance of the seven tested algorithms when solving the AUC maximization problem, where the horizontal axis represents the number of iterations  and the vertical axis represents the gradient norm of  $f(x,y)$. The results show that the proposed FF-CR algorithm slightly outperforms the other algorithms.
\begin{figure}[t]
	\centering
	\subfigure[a9a]{\includegraphics[width=0.48\textwidth, trim=20pt 2pt 5pt 50pt]{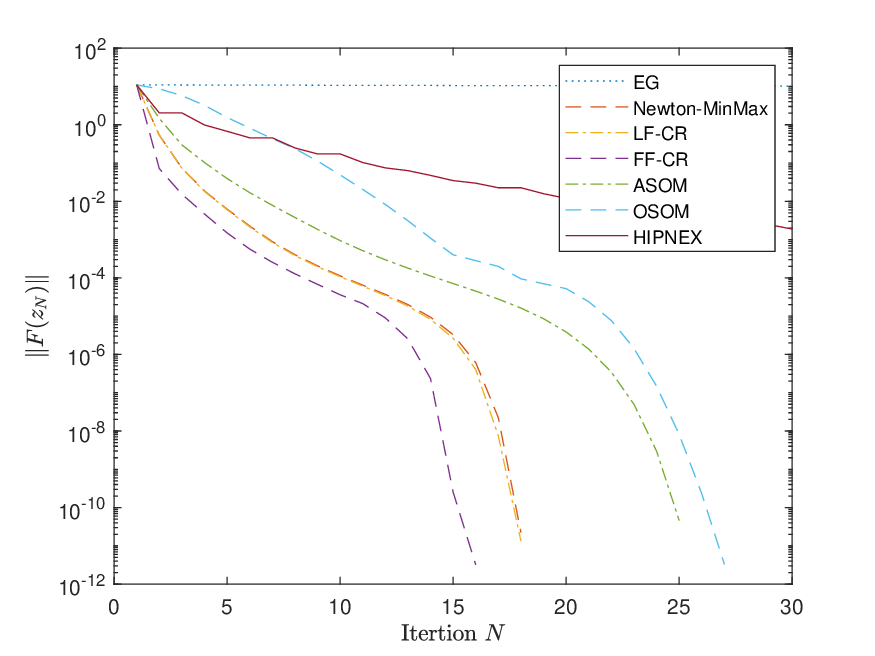}}
	\subfigure[w8a]{\includegraphics[width=0.48\textwidth, trim=20pt 2pt 5pt 50pt]{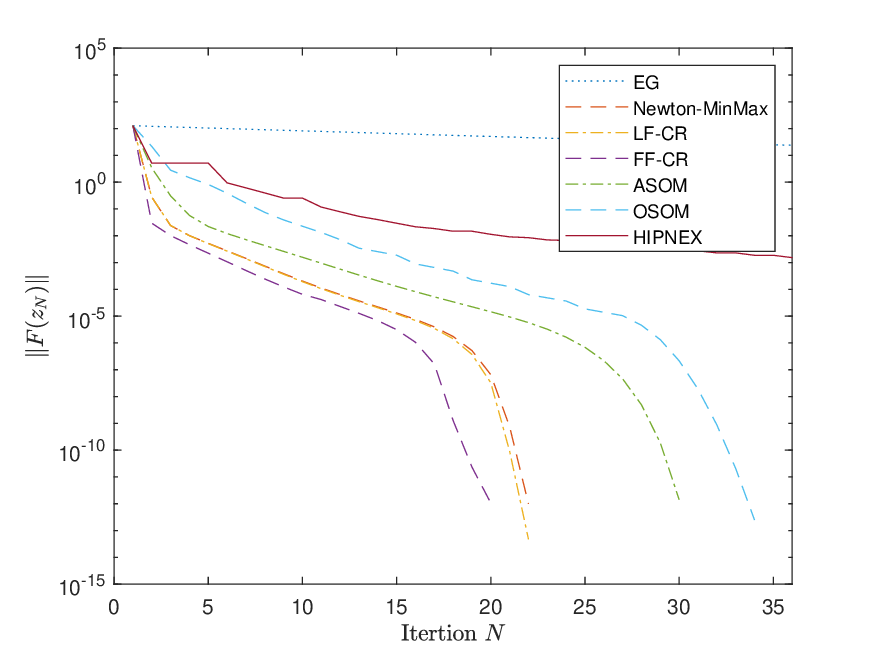}}
	\caption{ Numerical results of the tested algorithms for solving
		AUC maximization problem}
	\label{fig2}
\end{figure}

\section{Conclusions}
In this paper, we propose parameter-free second-order algorithms for unconstrained convex-concave minimax optimization problems. Under a mild assumption that the Hessian is locally Lipschitz continuous, the proposed LF-CR algorithm achieves the iteration complexity of $\mathcal{O}(\rho^{2/3}\|z_0-z^*\|^2\epsilon^{-2/3})$ (resp. $\mathcal{O}(  \rho \|z_0-z^*\|^2 \epsilon^{-1})$) in terms of the $\epsilon$-optimal solution of the restricted primal-dual gap (resp. gradient norm) without prior knowledge of the Lipschitz constants. And  
the proposed FF-CR algorithm achieves  the iteration complexity of  $\mathcal{O}( \rho^{2/3}\|z_0-z^*\|^{4/3}\epsilon^{-2/3})$ in terms of the $\epsilon$-optimal solution of the gradient norm without any prior knowledge of problem parameters. To the best of our knowledge, the proposed FF-CR algorithm is a completely parameter-free second-order algorithm, and its iteration complexity is currently the best in terms of $\epsilon$  under the termination criterion of the gradient norm.

The only disadvantage of the proposed FF-CR algorithm is that it requires the exact optimal solutions of the subproblems. If the subproblems are allowed to be solved inexactly, whether it is still possible to obtain the current iteration complexity of a completely parameter-free algorithm is worth further study.

\vskip 0.2in
\bibliography{FF-CR}

\end{document}